\numberwithin{equation}{section}
\theoremstyle{plain}
\newtheorem{thm}{Theorem}[section]
\newtheorem{prop}{Proposition}[section]
\theoremstyle{definition}
\newtheorem{defin}{Definition}[section]
\newtheorem{qst}{Question}
\newtheorem{remark}{Remark}[section]
\newcommand{\R}{\mathbb R}
\begin{document}
 
\title[nearest points and DC functions]{nearest points and delta convex functions in Banach spaces}

\author{Jonathan M. Borwein \and Ohad Giladi}
\address{Centre for Computer-assisted Research Mathematics and its Applications (CARMA), School of Mathematical and Physical Sciences, University of Newcastle, Callaghan, NSW 2308, Australia}
\email{jonathan.borwein@newcastle.edu.au, ohad.giladi@newcastle.edu.au}

\begin{abstract}
Given a closed set $C$ in a Banach space $(X, \|\cdot\|)$, a point $x\in X$ is said to have a nearest point in $C$ if there exists $z\in C$ such that $d_C(x) =\|x-z\|$, where $d_C$ is the distance of $x$ from $C$. We shortly survey the problem of studying how large is the set of points in $X$ which have nearest points in $C$. We then discuss the topic of delta-convex functions and how it is related to finding nearest points. 
\end{abstract}

%\date{\today}
 
\subjclass[2010]{46B10. 41A29}
 
\maketitle
 
\section{Nearest points in Banach spaces}
 
\subsection{Background}\label{sec intro}
 
Let $(X, \|\cdot\|)$ be a real Banach space, and let $C\subseteq X$ be a non-empty closed set. Given $x\in X$, its distance from $C$ is given by
\[d_C(x) = \inf_{y\in C} \|x-y\|.\]
If there exists $z\in C$ with $d_C(x) = \|x-z\|$, we say that $x$ has a \emph{nearest point} in $C$. Let also
\[N(C) = \big\{x\in X: x \text{ has a nearest point in $C$ }\big\}.\]
One can then ask questions about the structure of the set $N(C)$. This question has been studied in \cite{Ste63, Lau78, Kon80, Zaj83, BF89, DMP91, Dud04, RZ11, RZ12} to name just a few. More specifically, the following questions are at the heart of this note:

\medskip
\begin{center}
\emph{Given a nonempty closed set $C\subseteq X$, how large is the set $N(C)$? When is it non-empty?}
\end{center}
\medskip

One way to do so is to consider sets which are large in the set theoretic sense, such as dense $G_{\delta}$ sets. We begin with a few definitions.

\begin{defin}
If $N(C) =  X$, i.e., every point in $X$ has a nearest point in $C$, then $C$ is said to be proximinal. If $N(C)$ contains a dense $G_{\delta}$ set, then $C$ is said to be almost proximinal.
\end{defin}

In passing we recall that If every point in $X$ is uniquely proximinal then $C$ is said to be a \emph{Chebyshev set}  It has been conjectured for over half a century,  that in Hilbert space 
Chebyshev sets are necessarily convex, but this is only proven for weakly closed sets \cite{BV10}. See also \cite{FM15} for a recent survey on the topic.

For example, closed convex sets in reflexive spaces are proximinal, as well as closed sets in finite dimensional spaces. See \cite{BF89}. One can also consider stronger notions of ``large" sets. See Section \ref{sec porous}. First, we also need the following definition.

\begin{defin}
A Banach space is said to be a (sequentially) Kadec space if for each sequence $\{x_n\}$ that converges weakly to $x$ with $\lim \|x_n\| = \|x\|$, $\{x_n\}$ converges to $x$ in norm, i.e.,
\[\lim_{n\to \infty} \|x-x_n\| =0.\]
\end{defin}

With the above definitions in hand, the following result holds.

\begin{thm}[Lau \cite{Lau78}, Borwein-Fitzpatrick \cite{BF89}]\label{thm Lau BF}
If $X$ is a reflexive Kadec space and $C\subseteq X$ is closed, then $C$ is almost proximinal.
\end{thm}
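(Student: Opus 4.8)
The plan is to route everything through the distance function $d_C$, exploiting the elementary but crucial facts that $d_C$ is $1$-Lipschitz and is bounded above by each of the convex functions $x\mapsto\|x-y\|$, $y\in C$. Concretely I would prove two statements and combine them: (A) \emph{if $d_C$ is Fréchet differentiable at a point $x\in X\setminus C$, then $x\in N(C)$}; and (B) \emph{$d_C$ is Fréchet differentiable at every point of a dense $G_\delta$ subset $G$ of the open set $X\setminus C$}. Granting both, $G\subseteq N(C)$ by (A), so $N(C)\supseteq G\cup\mathrm{int}(C)$. Since $X\setminus C$ is open, $G$ is a $G_\delta$ of $X$, hence so is the finite union $G\cup\mathrm{int}(C)$; its closure contains $\overline{X\setminus C}\cup\mathrm{int}(C)=X$, so it is dense. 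As interior points of $C$ and, trivially, all points of $C$ lie in $N(C)$, this yields almost proximinality.

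For (A), fix $x\notin C$, put $r=d_C(x)>0$ and $p=d_C'(x)\in X^*$; since $d_C$ is $1$-Lipschitz, $\|p\|\le 1$. Choose a minimizing sequence $y_n\in C$, $\|x-y_n\|\to r$, and set $u_n=(x-y_n)/\|x-y_n\|$, a unit vector. Moving from $x$ toward $y_n$ gives, for small $t>0$,
\[
d_C(x-tu_n)\le\|x-tu_n-y_n\|=\|x-y_n\|-t,
\]
so that $r-d_C(x-tu_n)\ge t-(\|x-y_n\|-r)$. Comparing with the Fréchet expansion $d_C(x-tu_n)=r-t\,\langle p,u_n\rangle+o(t)$, where the remainder is $o(t)$ \emph{uniformly} in the directions $u_n$, and letting first $n\to\infty$ and then $t\to 0$, I obtain $\langle p,u_n\rangle\to 1$ and hence $\|p\|=1$. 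Now reflexivity enters: the unit ball is weakly compact, so along a subsequence $u_n\rightharpoonup u$ with $\langle p,u\rangle=1$; then $1=\langle p,u\rangle\le\|u\|\le 1$ forces $\|u\|=1$. Thus $u_n\rightharpoonup u$ with $\|u_n\|=1\to\|u\|$, and here the Kadec property upgrades this to $u_n\to u$ in norm. Consequently $y_n=x-\|x-y_n\|u_n\to x-ru$ in norm; since $C$ is closed the limit lies in $C$ and has distance exactly $r$ from $x$, i.e.\ it is a nearest point.

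The hard part is (B). I would use that a reflexive space is Asplund, so that \emph{every} continuous convex function — in particular each majorant $\|\cdot-y\|$ and the norm itself — is Fréchet differentiable on a dense $G_\delta$. The difficulty is that $d_C$ is only the \emph{infimum} of such convex functions, and infima of convex functions are not convex; moreover, a general Lipschitz function on an Asplund space need not be Fréchet differentiable on a residual set, so the special structure of $d_C$ must be used. The mechanism I would exploit is that near a point $x_0$ the value $d_C(x)$ is attained up to $\eps$ by near-points $y$, and each convex majorant $\|\cdot-y\|$ touches $d_C$ from above there; combining a Fréchet-differentiable convex upper bound that touches with the one-sided ``cone'' lower estimate coming from the $1$-Lipschitz property pins down a genuine Fréchet derivative of $d_C$. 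Running this on a countable family of scales via a Baire-category argument in the Asplund space $X$ produces the required dense $G_\delta$ set $G$. This is exactly the step where the full geometry of reflexive spaces is needed, and I expect it, rather than (A) or the topological bookkeeping in the conclusion, to be the main obstacle.
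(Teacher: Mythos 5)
Your step (A) is essentially correct, and it is in substance the paper's own argument that every point of the set $\Omega(C)$ (in the paper's notation) has a nearest point: the uniformity of the Fr\'echet remainder over the varying directions $u_n$ forces $\langle p,u_n\rangle\to 1$, and then weak compactness (reflexivity) plus the Kadec property upgrade the minimizing sequence to norm convergence. The final topological bookkeeping is also fine. The genuine gap is (B). What you require there --- Fr\'echet differentiability of the non-convex function $d_C$ on a dense $G_\delta$ subset of $X\setminus C$ --- is both unproved and far beyond what your sketch can deliver. Two distinct problems arise. First, density: the Asplund property of reflexive spaces gives generic Fr\'echet differentiability only for \emph{convex} continuous functions; for general Lipschitz functions even plain density of Fr\'echet differentiability points on Asplund spaces is a deep theorem of Preiss (1990), and your ``touching majorant'' mechanism does not substitute for it, because the convex majorant $\|\cdot-y\|$ touches $d_C$ at $x$ precisely when $y$ is a nearest point of $x$ --- the very object whose existence is being proved; with only $\epsilon$-near touching, extracting a derivative requires quantitative uniform convexity/smoothness control that a general reflexive Kadec space does not possess. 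Second, residuality: the set of Fr\'echet differentiability points of a Lipschitz function need not be residual (this can already fail for suitable Lipschitz functions on $\mathbb R$), so even granting density you have no route to the $G_\delta$ assertion.

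The paper circumvents both problems with two ideas missing from your proposal. First, notice that your argument in (A) never uses the upper half of the Fr\'echet expansion: only the lower estimate $d_C(x+h)\ge d_C(x)+\langle p,h\rangle-o(\|h\|)$ enters when you compare with $d_C(x-tu_n)\le\|x-y_n\|-t$. Hence Fr\'echet \emph{sub}-differentiability suffices, and sub-differentiability of the lower semicontinuous function $d_C$ on a dense set is exactly what the Borwein--Preiss smooth variational principle provides on reflexive spaces --- a much softer tool than Preiss's full differentiability theorem. Second, the dense $G_\delta$ set inside $N(C)$ is not taken to be the (sub-)differentiability set at all: the paper introduces the sets $L_n(C)$, which are open, so that $L(C)=\bigcap_n L_n(C)$ is automatically $G_\delta$, and uses reflexivity to identify $L(C)$ with $\Omega(C)$; the sub-differentiability points lie inside $\Omega(C)$, which makes $\Omega(C)$ dense in $X\setminus C$, and every point of $\Omega(C)$ has a nearest point by the same weak-compactness-plus-Kadec argument you gave. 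This decoupling --- a merely dense set of good points, contained in a $G_\delta$ set each of whose points still admits a nearest point --- is what replaces your unproven claim (B), and without it (or some substitute such as showing that the set of well-posed points is $G_\delta$) your proof does not close.
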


The assumptions on $X$ are in fact necessary.

\begin{thm}[Konjagin \cite{Kon80}]\label{thm Kon}
If $X$ is not both Kadec and reflexive, then there exist $C\subseteq X$ closed and $U\subseteq X\setminus C$ open such that no $x\in U$ has a nearest point in $C$.
\end{thm}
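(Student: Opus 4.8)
The plan is to prove the statement by splitting according to which hypothesis of Theorem \ref{thm Lau BF} fails: either $X$ is not reflexive, or $X$ is reflexive but not Kadec. In each case I will produce the closed set $C$ together with the open set $U$ of points having no nearest point.

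First, suppose $X$ is not reflexive. By James' theorem there is a functional $f\in X^*$ with $\|f\|=1$ that does not attain its supremum on the closed unit ball. I would take $C=\ker f$, a closed hyperplane, and $U=\{x: f(x)\neq 0\}=X\setminus C$, which is open. For $x\in U$ one has $d_C(x)=|f(x)|$, and a nearest point $z\in\ker f$ would produce $w=x-z$ with $f(w)=f(x)\neq 0$ and $\|w\|=|f(x)|$; then $|f(w/\|w\|)|=\|f\|$, so $f$ attains its norm at $w/\|w\|$, a contradiction. Hence no point of $U$ has a nearest point in $C$, and this case is complete.

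Second, suppose $X$ is reflexive but fails the Kadec property. Unwinding the definition gives a sequence $x_n\rightharpoonup x$ with $\|x_n\|\to\|x\|$ but $\|x_n-x\|\geq\delta>0$; after passing to a subsequence I may assume $\|x_n-x\|$ is strictly decreasing to its infimum. The guiding principle is that reflexivity is an obstruction only for \emph{weakly} closed sets: if $C$ is bounded and weakly closed then, given any $p$, a minimizing sequence $c_k\in C$ is bounded, hence (by reflexivity) has a weakly convergent subsequence $c_k\rightharpoonup c\in C$, and weak lower semicontinuity of the norm gives $\|p-c\|\le d_C(p)$, so the distance is attained. Thus any witnessing $C$ must be norm closed but not weakly closed, and the natural candidate is built from the sequence itself, exploiting that the weak limit $x$ lies in the weak closure of $\{x_n\}$ but not in its norm closure (no subsequence converges to $x$ in norm, since $\|x_n-x\|\ge\delta$). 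For the single point $x$ this already yields non-attainment: $d_{\{x_n\}}(x)=\inf_n\|x_n-x\|$ is approached but never achieved.

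The main obstacle is upgrading non-attainment at one point to non-attainment on an \emph{open} set $U$, as the theorem demands, and this is genuinely delicate. Non-attainment does not localize for the naive choice $C=\{x_n\}$: points of the form ``$x$ plus a small multiple of a far-out $x_m$'' can acquire a bona fide nearest point yet lie arbitrarily close to $x$. The correct construction must instead arrange that for every $p$ in a whole ball, \emph{every} minimizing sequence $c_k\in C$ has all its weak cluster points outside $C$, so that the minimizers consistently leak toward phantom weak limits supplied by the non-Kadec sequence. Carrying this out requires a careful quantitative selection of the points of $C$ (using the freedom in choosing a weakly null, norm-bounded-below sequence, nearly annihilating finitely many test functionals at each stage, together with the reflexivity of $X$), so that on $U$ the distance function is pinned from below by weak lower semicontinuity and from above by the escaping points, with equality never attained. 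Verifying that this $C$ is norm closed and that the infimum is uniformly unattained across $U$ is the crux of the argument.
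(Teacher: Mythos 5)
First, a point of comparison: the paper states this theorem without proof, attributing it to Konjagin, so your proposal has to be judged against what a complete proof of the statement requires. Your Case 1 (non-reflexive) is correct and complete: James' theorem yields a norm-one $f\in X^*$ not attaining its norm, the formula $d_{\ker f}(x)=|f(x)|$ holds, and a nearest point $z$ would give a unit vector $w/\|w\|$, $w=x-z$, with $|f(w/\|w\|)|=1=\|f\|$, a contradiction; this is the standard route for that half. The genuine gap is Case 2, which is the actual content of Konjagin's theorem. There you correctly diagnose why the naive choice $C=\{x_n\}$ fails (points near $x$ with a component along a far-out $x_m$ do acquire nearest points), but you then replace the construction by a description of what a construction ``must arrange'' — ``careful quantitative selection,'' ``uniformly unattained across $U$ is the crux.'' No set $C$ is defined, no open set $U$ is exhibited, and nothing is verified. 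An acknowledged, unexecuted crux is a gap, not a proof. (A smaller inaccuracy: one cannot always pass to a subsequence with $\|x_n-x\|$ strictly decreasing to its infimum — the norms may be constant — though this is fixable by rescaling.)

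Worse, the toolkit you name for the would-be construction is provably insufficient. You propose to use ``the freedom in choosing a weakly null, norm-bounded-below sequence, nearly annihilating finitely many test functionals at each stage, together with the reflexivity of $X$.'' Every one of these ingredients is available in $\ell_2$: the unit vectors $e_n$ are weakly null, have norm one, and can be chosen nearly orthogonal to any finite set of test functionals. Yet $\ell_2$ is reflexive and Kadec, so by Theorem \ref{thm Lau BF} every closed set there is almost proximinal and no pair $(C,U)$ as in the conclusion can exist. Hence any correct construction must exploit, in an essential way, the full non-Kadec data you started from — a sequence with $x_n\rightharpoonup x$, $\|x_n\|\to\|x\|$, $x\neq 0$, and $\|x_n-x\|\ge\delta$ — i.e., weak convergence \emph{without norm drop} yet without norm convergence; this is exactly what lets one pin $d_C$ from below on a whole ball while minimizing sequences escape weakly. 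Your sketch never uses this feature, so as described it cannot succeed. To complete the proof you must actually carry out Konjagin's construction (or an equivalent one) for the reflexive non-Kadec case, not assert that one exists.
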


It is known that under stronger assumption on $X$ one can obtain stronger results on the set $N(C)$. See Section \ref{sec porous}.

\medskip

\subsection{Fr\'echet sub-differentiability and nearest points}\label{sec diff}
 
We begin with a definition.

\begin{defin}
Assume that $f: X\to \R$ is a real valued function with $f(x)$ finite. Then $f$ is said to be Fr\'echet sub-differentiable at $x\in X$ if there exists $x^*\in X^*$ such that
\begin{align}\label{cond subdiff}
\liminf_{y\to 0}\frac{f(x+y)-f(x)-x^*(y)}{\|y\|} \ge 0.
\end{align}
The set of points in $X^*$ that satisfy \eqref{cond subdiff} is denoted by $\partial f(x)$.
\end{defin} 

Sub-derivatives have been found to have many applications in approximation theory. See for example \cite{BF89, BZ05, BL06, BV10, Pen13}. 
 
One of the connections between sub-differentiability and the nearest point problem was studied in \cite{BF89}. Given $C\subseteq X$ closed, the following modification of a construction of \cite{Lau78} was introduced.
\begin{align*}
L_n(C) = \Big\{x\in X\setminus C : \exists x^*\in \mathbb S_{X^*} \text{ s.t. }\sup_{\delta>0}~\inf_{z\in C\cap B(x,d_C(x)+\delta)}~x^*(x-z) > \big(1-2^{-n}\big)d_C(x)\Big\},
\end{align*}
where $\mathbb S_{X^*}$ denotes the unit sphere of $X^*$. Also, let
\begin{align*}
L(C) = \bigcap_{n=1}^{\infty}L_n(C).
\end{align*}
The following is known.
\begin{prop}[Borwein-Fitzpatrick \cite{BF89}]\label{prop open}
For every $n\in \mathbb N$, $L_n(C)$ is open. In particular, $L(C)$ is $G_{\delta}$.
\end{prop}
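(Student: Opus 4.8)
The plan is to prove directly that each $L_n(C)$ is open: I would show that whenever $x_0\in L_n(C)$, an entire ball around $x_0$ also lies in $L_n(C)$, crucially reusing the \emph{same} witnessing functional $x^*$ for every nearby point. The claim about $L(C)$ is then immediate, since $L(C)=\bigcap_n L_n(C)$ is by definition $G_\delta$ once each $L_n(C)$ is open. Throughout I would lean on two elementary facts: the distance function $d_C$ is $1$-Lipschitz, and since $C$ is closed with $x_0\notin C$ we have $d_C(x_0)>0$, so every point close enough to $x_0$ still lies in $X\setminus C$.

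Fix $x_0\in L_n(C)$ with witness $x^*\in\mathbb S_{X^*}$, and abbreviate (with $x^*$ fixed)
\[
g_x(\delta)=\inf_{z\in C\cap B(x, d_C(x)+\delta)} x^*(x-z),\qquad \phi(x)=\sup_{\delta>0} g_x(\delta),
\]
so that $\phi(x_0)>(1-2^{-n})d_C(x_0)$; call the positive gap $\gamma$. By the definition of the supremum I would first pick a single radius $\delta_0>0$ with $g_{x_0}(\delta_0)>\phi(x_0)-\gamma/2$. The heart of the argument is then a ball-inclusion estimate: for $x$ near $x_0$ and $\delta=\delta_0-2\|x-x_0\|>0$, every $z\in C\cap B(x, d_C(x)+\delta)$ satisfies $\|z-x_0\|\le\|z-x\|+\|x-x_0\|\le d_C(x)+\delta+\|x-x_0\|\le d_C(x_0)+\delta_0$, where the last step uses the Lipschitz bound $d_C(x)\le d_C(x_0)+\|x-x_0\|$ and the choice of $\delta$. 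Hence such a $z$ also lies in $C\cap B(x_0, d_C(x_0)+\delta_0)$.

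Combining this inclusion with $x^*(x-z)\ge x^*(x_0-z)-\|x-x_0\|$ (valid since $\|x^*\|=1$), I would infer that
\[
\phi(x)\ge g_x\big(\delta_0-2\|x-x_0\|\big)\ge g_{x_0}(\delta_0)-\|x-x_0\|>\phi(x_0)-\gamma/2-\|x-x_0\|.
\]
Writing $\phi(x_0)=(1-2^{-n})d_C(x_0)+\gamma$ and using $(1-2^{-n})d_C(x)\le(1-2^{-n})d_C(x_0)+\|x-x_0\|$, the right-hand side exceeds $(1-2^{-n})d_C(x)$ as soon as $\|x-x_0\|<\min\{\delta_0/2,\gamma/4\}$. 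For such $x$ the same $x^*$ witnesses $x\in L_n(C)$, which proves that $L_n(C)$ is open.

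The step I expect to be the main obstacle is the bookkeeping around the supremum–infimum structure, in which both the centre and the radius of $B(x, d_C(x)+\delta)$ move with $x$. The clean way through, which the plan exploits, is to freeze one witnessing radius $\delta_0$ at $x_0$ and absorb the perturbation of the centre into the radius via $C\cap B(x, d_C(x)+\delta_0-2\|x-x_0\|)\subseteq C\cap B(x_0, d_C(x_0)+\delta_0)$, reducing everything to two triangle inequalities and the $1$-Lipschitz property of $d_C$. I would also record the minor points that $C\cap B(x, d_C(x)+\delta)$ is nonempty and that $g_x(\delta)$ is finite, both of which follow directly from the definition of $d_C$.
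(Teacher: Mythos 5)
Your proof is correct. Note that the paper itself gives no proof of this proposition --- it is quoted from \cite{BF89} --- so there is nothing internal to compare against; your argument is the natural one and matches the standard Borwein--Fitzpatrick reasoning. The two key devices you use (freezing the witnessing functional $x^*$ and one radius $\delta_0$, then absorbing the shift of the ball's centre into a shrunken radius via $C\cap B(x,d_C(x)+\delta_0-2\|x-x_0\|)\subseteq C\cap B(x_0,d_C(x_0)+\delta_0)$, together with the $1$-Lipschitz bound on $d_C$) are exactly what is needed, and your bookkeeping with the gap $\gamma$ is sound; one can even note that $\phi(x_0)\le d_C(x_0)$ forces $\gamma\le 2^{-n}d_C(x_0)$, so the restriction $\|x-x_0\|<\gamma/4$ automatically keeps $x$ in $X\setminus C$, closing the one small point you left implicit.
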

Finally, let
\begin{align*}
\Omega(C)  = & \Big\{ x\in X\setminus C : \exists x^*\in \mathbb S_{X^*}, \text{ s.t. } \forall \epsilon >0, \exists \delta>0,
\\ & ~~\quad\inf_{z\in C\cap B(x,d_C(x)+\delta)}x^*(x-z) > \big(1-\epsilon\big)d_C(x)\Big\}.
\end{align*}

While $L(C)$ is $G_{\delta}$ by Proposition \ref{prop open}, under the assumption that $X$ is reflexive, the following is known.

\begin{prop}[Borwein-Fitzpatrick \cite{BF89}]
If $X$ is reflexive then $\Omega(C) = L(C)$. In particular, $\Omega(C)$ is $G_{\delta}$.
\end{prop}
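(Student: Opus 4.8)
The plan is to prove the two inclusions separately, with only $L(C)\subseteq\Omega(C)$ requiring reflexivity. Throughout I would write $r=d_C(x)$, which is strictly positive since $C$ is closed and $x\notin C$, and for $\phi\in X^*$ with $\|\phi\|\le1$ abbreviate
\[F(\phi)=\sup_{\delta>0}\inf_{z\in C\cap B(x,r+\delta)}\phi(x-z).\]
As $\delta\downarrow0$ the inner infimum is taken over a shrinking set, so $\delta\mapsto\inf_z\phi(x-z)$ is non-increasing and $F(\phi)$ is simply its limit as $\delta\to0^+$. Evaluating the infimum at a near-nearest point shows $F(\phi)\le\|\phi\|\,r$, and in particular $F(\phi)\le r$. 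In this language $x\in\Omega(C)$ is equivalent to the existence of $\phi\in\mathbb S_{X^*}$ with $F(\phi)=r$, while $x\in L(C)$ means that for each $n$ there is some $\phi_n\in\mathbb S_{X^*}$ with $F(\phi_n)>(1-2^{-n})r$. The inclusion $\Omega(C)\subseteq L(C)$ is then immediate and uses no hypothesis on $X$: a witness $\phi$ for $\Omega(C)$ has $F(\phi)=r>(1-2^{-n})r$ for every $n$, so $x\in L_n(C)$ for all $n$.

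For the reverse inclusion I would first extract a single candidate functional. Fixing $x\in L(C)$, for each $n$ choose $\phi_n\in\mathbb S_{X^*}$ with $F(\phi_n)>(1-2^{-n})r$; unwinding the supremum produces $\delta_n\in(0,1]$ with
\[\phi_n(x-z)>(1-2^{-n})r\qquad\text{for all }z\in C\cap B(x,r+\delta_n).\]
Here reflexivity enters: the closed unit ball of $X^*$ is weakly compact (Banach--Alaoglu together with reflexivity), hence weakly sequentially compact by Eberlein--\v{S}mulian, and since $X$ is reflexive the weak and weak-$*$ topologies on $X^*$ coincide. Passing to a subsequence I may thus assume $\phi_n\rightharpoonup\phi^*$ with $\|\phi^*\|\le1$, and the goal reduces to $F(\phi^*)=r$: indeed, once that holds, $r\le F(\phi^*)\le\|\phi^*\|\,r$ forces $\|\phi^*\|=1$, so $\phi^*\in\mathbb S_{X^*}$ automatically witnesses $x\in\Omega(C)$.

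The main obstacle is that weak convergence $\phi_n\rightharpoonup\phi^*$ only gives $\phi_n(x-z)\to\phi^*(x-z)$ for each fixed $z$, and this pointwise control resists being combined with the displayed lower bounds: the radii $\delta_n$ may shrink to $0$, so no fixed near-nearest point $z$ stays in $B(x,r+\delta_n)$ for all large $n$, while a weakly convergent choice $z=z_n$ would have to be paired with the weakly convergent $\phi_n$, a pairing that need not pass to the limit. The device that resolves this is Mazur's lemma applied to tails. For each $m$ the tail $(\phi_n)_{n\ge m}$ still converges weakly to $\phi^*$, so there is a finite convex combination $\psi=\sum_{n\ge m}\lambda_n\phi_n$ with $\|\psi-\phi^*\|$ as small as desired. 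Since $\psi$ has finite support $S\subseteq\{m,m+1,\dots\}$, the number $\delta:=\min_{n\in S}\delta_n$ is strictly positive, and for every $z\in C\cap B(x,r+\delta)$ each term satisfies $\phi_n(x-z)>(1-2^{-n})r\ge(1-2^{-m})r$, whence $\psi(x-z)>(1-2^{-m})r$. Because $\psi\to\phi^*$ in norm, the correction $|\psi(x-z)-\phi^*(x-z)|\le\|\psi-\phi^*\|\,(r+1)$ is \emph{uniform} over this bounded set, so taking $m$ large and $\|\psi-\phi^*\|$ small yields $\phi^*(x-z)>(1-\eps)r$ for all $z\in C\cap B(x,r+\delta)$; letting $\eps\to0$ gives $F(\phi^*)\ge r$, hence $F(\phi^*)=r$. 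I expect the only delicate bookkeeping to be the quantitative choice of $m$ and of the Mazur tolerance in terms of $\eps$ and $r$, which is routine once the tail-plus-norm-convergence idea is in place; note also that reflexivity is genuinely used twice, once to produce the weak limit $\phi^*$ and implicitly again since Mazur upgrades this to norm convergence, neither of which is available for a merely weak-$*$ limit.
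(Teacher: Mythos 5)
Your proof is correct, but note that there is nothing in the paper to compare it against: the proposition is stated as a quotation from Borwein--Fitzpatrick \cite{BF89} and no proof is given in this note, so your argument stands as a self-contained substitute. You correctly observe that the inclusion $\Omega(C)\subseteq L(C)$ is trivial and needs no hypothesis on $X$, and you isolate the genuine difficulty in $L(C)\subseteq\Omega(C)$: the witnesses $\phi_n$ and radii $\delta_n$ vary with $n$, so a weak limit $\phi^*$ of the $\phi_n$ cannot be paired naively with the lower bounds, since the $\delta_n$ may shrink to $0$. Your resolution is sound: reflexivity yields a weakly convergent subsequence in $B_{X^*}$, and Mazur's lemma applied to tails produces finite convex combinations $\psi=\sum_{n\in S}\lambda_n\phi_n$, $S\subseteq\{m,m+1,\dots\}$, norm-close to $\phi^*$; since $S$ is finite, $\delta=\min_{n\in S}\delta_n>0$, the bound $\psi(x-z)>(1-2^{-m})r$ holds on $C\cap B(x,r+\delta)$ because each term obeys it (this is the concavity of $\phi\mapsto\inf_z\phi(x-z)$ in action), and the norm estimate $|\psi(x-z)-\phi^*(x-z)|\le\|\psi-\phi^*\|(r+1)$ is uniform over that bounded set. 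The bookkeeping you defer is indeed routine: given $\eps>0$, choosing $2^{-m}<\eps/2$ and $\|\psi-\phi^*\|(r+1)<\eps r/4$ gives $\inf_{z\in C\cap B(x,r+\delta)}\phi^*(x-z)\ge(1-3\eps/4)r>(1-\eps)r$, which recovers the strict inequality required in the definition of $\Omega(C)$, and $\|\phi^*\|=1$ follows, as you say, from $r\le F(\phi^*)\le\|\phi^*\|r$ together with $r=d_C(x)>0$. The only cosmetic slip is the phrase that reflexivity is ``implicitly'' used in Mazur's lemma: Mazur applies to weak convergence in any Banach space; reflexivity is used once, to guarantee that the bounded sequence $(\phi_n)$ has a limit in the \emph{weak} (not merely weak-$*$) topology of $X^*$, which is precisely what makes Mazur applicable.
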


The connection to sub-differentiability is given in the following proposition.

\begin{prop}[Borwein-Fitzpatrick \cite{BF89}]
If $x\in X\setminus C$ and $\partial d_C(x) \neq \emptyset$, then $x\in \Omega(C)$.
\end{prop}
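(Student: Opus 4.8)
The plan is to take an arbitrary Fr\'echet subgradient $p\in \partial d_C(x)$ and show that $p$ itself (which will turn out to have norm one) witnesses membership of $x$ in $\Omega(C)$. Since $C$ is closed and $x\notin C$, we have $d:=d_C(x)>0$. First I would record two elementary facts. Unwinding the defining $\liminf$ inequality gives the \emph{subgradient bound}: for every $\eta>0$ there is $r>0$ such that
\[
d_C(x+y)\ge d + p(y) - \eta\|y\| \qquad \text{whenever } \|y\|<r.
\]
Second, for any $z\in C$ and any $s\in(0,1)$, evaluating $d_C$ at the segment point $x+s(z-x)=(1-s)x+sz$ against the competitor $z\in C$ gives the \emph{segment bound}
\[
d_C\big(x+s(z-x)\big)\le \|(1-s)(x-z)\| = (1-s)\|x-z\|.
\]

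Next I would combine the two bounds along the direction from $x$ toward a near-minimizer. Fix $\epsilon>0$ and set $\eta=\epsilon/2$, obtaining the corresponding $r$ from the subgradient bound. Choose once and for all a fixed step $s_0\in(0,1)$ small enough that $s_0(d+1)<r$, so that $y=s_0(z-x)$ lies in the admissible neighborhood uniformly over all $z$ with $\|x-z\|<d+1$. For such $z$, applying the subgradient bound with $y=s_0(z-x)$, then the segment bound, and finally dividing by $s_0>0$, yields
\[
p(x-z)\ge \frac{d-\|x-z\|}{s_0} + (1-\eta)\|x-z\|.
\]
Now I would restrict to $z\in C\cap B(x,d+\delta)$ with $\delta<1$, so that $d\le \|x-z\|<d+\delta$: the first term then exceeds $-\delta/s_0$ and the second is at least $(1-\eta)d$. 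Choosing $\delta< s_0\,\eta\, d$ makes the right-hand side strictly larger than $(1-2\eta)d=(1-\epsilon)d$, uniformly in $z$. Hence $\inf_{z\in C\cap B(x,d+\delta)}p(x-z)>(1-\epsilon)d$, which is precisely the defining inequality of $\Omega(C)$ for this $\epsilon$.

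Finally I would check that $p\in\mathbb S_{X^*}$, as the definition of $\Omega(C)$ demands. The $1$-Lipschitz property of $d_C$ forces $\|p\|\le 1$ straight from the subgradient bound. Conversely, the estimate just obtained gives $\|p\|\ge p(x-z)/\|x-z\|>(1-\epsilon)d/(d+\delta)$ for admissible $z$, and letting $\epsilon,\delta\to 0$ forces $\|p\|\ge 1$; thus $\|p\|=1$ (in particular $p\neq 0$). Since $\epsilon>0$ was arbitrary, this establishes $x\in\Omega(C)$ with witness $x^*=p$. The one genuinely delicate point, and the step I would guard most carefully, is the interaction between the two small parameters: the step $s$ along the segment must be fixed in terms of $\epsilon$ \emph{alone} before $\delta$ is sent to zero, because letting $s\to 0$ with $\delta$ held fixed would make the error term $\delta/s$ blow up. Freezing $s$ and shrinking $\delta$ afterwards is exactly what reconciles these competing limits.
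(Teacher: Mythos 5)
Your proof is correct: the interplay between the subgradient bound and the segment bound is set up properly, the order of quantifiers (fixing the step $s_0$ in terms of $\epsilon$ before shrinking $\delta$, so the error term $\delta/s_0$ stays controlled) is handled carefully, and the verification that the subgradient $p$ has norm one, as the definition of $\Omega(C)$ requires, is sound. The paper states this proposition without proof, deferring to \cite{BF89}, and your argument is essentially the standard one used there, so there is nothing further to reconcile.
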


Also, the following result is known.

\begin{thm}[Borwein-Preiss \cite{BP87}]\label{thm var}
If $f$ is lower semicontiuous on a reflexive Banach space, then $f$ is Fr\'echet sub-differentiable on a dense set.
\end{thm}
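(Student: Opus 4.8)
The plan is to derive this density statement from the smooth variational principle of \cite{BP87}: given a proper lower semicontinuous function $g$ that is bounded below, and any approximate minimizer $z_0$ of $g$, there is a point $v$ arbitrarily close to $z_0$ together with a convex perturbation $\phi(x)=\sum_{i\ge 0}\mu_i\|x-v_i\|^2$, with $\mu_i\ge 0$, $\sum_i\mu_i=1$ and $v_i\to v$, such that $g+c\,\phi$ attains a strict global minimum at $v$ for a suitable constant $c>0$. The key point is that, in a reflexive space, one may arrange $\phi$ to be \emph{Fr\'echet} differentiable, after which the minimality of $v$ forces $-c\,\nabla\phi(v)\in\partial g(v)$, and hence $\partial f(v)\neq\emptyset$ once $v$ is a point where $g$ and $f$ agree locally.

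First I would reduce to a bounded-below, localized problem. Density means that every nonempty open set $U\subseteq X$ contains a point of Fr\'echet subdifferentiability, so I fix $x_0\in U$ with $f(x_0)$ finite. Lower semicontinuity at $x_0$ yields an $r\in\big(0,\dist(x_0,X\setminus U)\big)$ with $f(x)\ge f(x_0)-1$ on $\bar B(x_0,r)$; thus $g:=f+\iota_{\bar B(x_0,r)}$ (where $\iota$ is $0$ on the ball and $+\infty$ off it) is proper, lower semicontinuous and bounded below, and $x_0$ is an $\eps$-minimizer of $g$ with $\eps=1$, since $g(x_0)=f(x_0)\le \inf g+1$. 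Next I would pass to an equivalent Fr\'echet-smooth norm on $X$, which exists because $X$ is reflexive; since the defining relation \eqref{cond subdiff} is insensitive to replacing the norm by a comparable one, Fr\'echet subdifferentiability is unchanged by this renorming, while each $\|\cdot-v_i\|^2$ becomes everywhere Fr\'echet differentiable.

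Then I would invoke the variational principle with base point $z_0=x_0$, exponent $p=2$, and localization radius $\lambda<r$, obtaining $v$ with $\|v-x_0\|\le\lambda<r$ at which $g+c\,\phi$ has a strict global minimum. Because $v$ lies in the interior of $\bar B(x_0,r)$, we have $g=f$ near $v$, so $f+c\,\phi$ has a local minimum at $v$; writing $\phi(v+y)-\phi(v)=\langle\nabla\phi(v),y\rangle+o(\|y\|)$ and dividing the minimality inequality by $\|y\|$ gives $\liminf_{y\to0}\tfrac{f(v+y)-f(v)-x^*(y)}{\|y\|}\ge0$ with $x^*=-c\,\nabla\phi(v)$, i.e. $x^*\in\partial f(v)$. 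Since $\|v-x_0\|<r$ forces $v\in U$, this produces the required subdifferentiability point in $U$, and density follows.

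The main obstacle is the engine itself, namely producing the perturbation $\phi$ and establishing that it is Fr\'echet, not merely G\^ateaux, differentiable at $v$. This is where reflexivity is used in earnest: one builds the centers $v_i$ as successive approximate minimizers of the partially perturbed functions, uses completeness of $X$ to force $v_i\to v$, and then controls the tail $\sum_{i\ge k}\mu_i\|\cdot-v_i\|^2$ together with the smooth renorming to obtain a genuine Fr\'echet gradient. Ensuring that the minimizer $v$ stays strictly inside $\bar B(x_0,r)$, so that the localizing indicator does not contribute to the subdifferential, is a second and more routine point, handled by choosing $\lambda$ small.
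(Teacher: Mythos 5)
The paper does not actually prove this theorem: it is quoted from \cite{BP87} as a known result, so there is no internal proof to compare against. Your derivation is correct and is, in substance, how Borwein--Preiss themselves obtain dense Fr\'echet subdifferentiability from their smooth variational principle: localize with the indicator of a small closed ball to produce a proper, bounded-below, lower semicontinuous function; apply the principle with exponent $p=2$ and localization $\lambda<r$; and read off the Fr\'echet subgradient $-c\,\nabla\phi(v)$ at the minimizer $v$, which lies in the interior of the ball, so the indicator is invisible locally and $\partial g(v)=\partial f(v)$. Two ingredients you invoke deserve to be flagged as carrying real content. First, the claim that reflexivity yields an equivalent Fr\'echet-smooth norm is true but nontrivial: it follows from Troyanski's theorem that the (reflexive) dual $X^*$ admits an equivalent LUR norm, together with the observation that every equivalent norm on the dual of a reflexive space is a dual norm, and \v{S}mulyan duality (dual LUR implies the predual norm is Fr\'echet smooth away from the origin). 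Second, Fr\'echet differentiability of the infinite perturbation $\phi=\sum_i\mu_i\|\cdot-v_i\|^2$ is not automatic from term-by-term differentiability; it holds because the tails $\sum_{i>N}\mu_i\|\cdot-v_i\|^2$ are locally Lipschitz with constants tending to $0$, so the derivatives of the partial sums form a Cauchy sequence in $X^*$ whose limit serves as the Fr\'echet derivative of $\phi$. Modulo these two points, and the variational principle itself --- which you reasonably treat as the engine supplied by \cite{BP87}, exactly as the paper does --- your argument is sound. It also makes transparent the paper's remark that the theorem holds under a weaker assumption: reflexivity enters only through the existence of the Fr\'echet-smooth renorm, so a Fr\'echet-smooth bump function would suffice.
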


In fact, Theorem \ref{thm var} holds under a weaker assumption. See \cite{BP87, BF89}.
Since the distance function is lower semicontinuous, it follows that it is sub-differentiable on a dense subset, and therefore, by the above propositions, $\Omega(C)$ is a dense $G_{\delta}$ set. Thus, in order to prove Theorem \ref{thm Lau BF}, it is only left to show that every $x\in \Omega(C)$ has a nearest point in $C$. Indeed, if $\{z_n\}\subseteq C$ is a minimizing sequence, then by extracting a subsequence, assume that $\{z_n\}$ has a weak limit $z\in C$. By the definition of $\Omega(C)$, there exists $x^* \in \mathbb S_{X^*}$ such that
\[\|x-z\| \ge x^*(x-z) = \lim_{n\to \infty} x^*(x-z_n) \ge d_C(x) = \lim_{n\to \infty}\|x-z_n\|.\]
On the other hand, by weak lower semicontinuity of the norm,
\[\lim_{n\to \infty} \|x-z_n\| \ge \|x-z\|,\]
and so $\|x-z\| = \lim\|x-z_n\|$. Since it is known that $\{z_n\}$ converges weakly to $z$, the Kadec property implies that in fact $\{z_n\}$ converges in norm to $z$. Thus $z$ is a nearest point. This completes the proof of Theorem \ref{thm Lau BF}.

This scheme of proof from \cite{BF89} shows that differentiation arguments can be used to prove that $N(C)$ is large.

\medskip

\subsection{Nearest points in non-Kadec spaces}\label{sec non-Kadec}
It was previously mentioned that closed convex sets in reflexive spaces are proximinal. It also known that non-empty ``Swiss cheese" sets (sets whose complement is a mutually disjoint union of open convex sets) in reflexive spaces are almost proximinal \cite{BF89}. These two examples show that for some classes of closed sets, the Kadec property can be removed. Moreover, one can consider another, weaker, way to ``measure" whether a set $C\subseteq X$ has ``many" nearest points: ask whether the set of nearest points in $C$ to points in $X\setminus C$ is dense in the boundary of $C$. Note that if $C$ is almost proximinal, then nearest points are dense in the boundary. The converse, however, is not true. In \cite{BF89} an example of a non-Kadec reflexive space was constructed where for every closed set, the set of nearest points is dense in its boundary. The following general question is still open.

\begin{qst}
Let $(X, \|\cdot\|)$ be a reflexive Banach space and $C\subseteq X$ closed. Is the set of nearest points in $C$ to points in $X\setminus C$ dense in its boundary?
\end{qst}

 Relatedly,  if the set $C$ is norm closed and bounded in a space with the \emph{Radon-Nikodym property} as is the caae of reflexive space, then $N(C)$ is nonempty and is large enough so that $\overline{\rm conv} C = \overline{\rm conv} N(C)$ \cite{BF89}.

\medskip
 
\subsection{Porosity and nearest points}\label{sec porous}
 
As was mentioned in subsection \ref{sec diff}, one can consider stronger notions of ``large" sets. One is the following notion.
 
\begin{defin}
A set $S\subseteq X$ is said to be porous if there exists $c\in (0,1)$ such that for every $x\in X$ and every $\epsilon>0$, there is a $y \in B(0,\epsilon)\setminus \{0\}$ such that
\[ B(x+y, c\|y\|) \cap S = \emptyset.\]
A set is said to be $\sigma$-porous if it a countable union of porous sets. Here and in what follows, $B(x,r)$ denotes the closed ball around $x$ with radius $r$.
\end{defin}

See \cite{Zaj05, LPT12} for a more detailed discussion on porous sets. It is known that every $\sigma$-porous set is of the first category, i.e., union of nowhere dense set. Moreover, it is known that the class of $\sigma$-porous sets is a proper sub-class of the class of first category sets. When $X=\R^n$, one can show that every $\sigma$-porous set has Lebesgue measure zero. This is not the case for every first category set: $\R$ can be written as a disjoint union of a set of the first category and a set of Lebesgue measure zero. Hence, the notion of porosity automatically gives a stronger notion of large sets: every set whose complement is $\sigma$-porous is also a dense $G_{\delta}$ set.

\medskip

A Banach space $(X, \|\cdot\|)$ is said to be \emph{uniformly convex} if the function
\begin{align}\label{def uni conv}
\delta(\epsilon) = \inf\left\{ 1-\left\|\frac{x+y}2\right\| ~ : ~ x, y\in \mathbb S_X, \|x-y\| \ge \epsilon \right\},
\end{align}
is strictly positive whenever $\epsilon>0$. Here $\mathbb S_X$ denotes the unit sphere of $X$. In \cite{DMP91} the following was shown.
 
\begin{thm}[De Blasi-Myjak-Papini \cite{DMP91}]\label{thm DMP}
If $X$ is uniformly convex, then $N(C)$ has a $\sigma$-porous compliment.
\end{thm}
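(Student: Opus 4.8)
The plan is to enlarge the target set slightly and then stratify it. Every $x\in X\setminus N(C)$ has no nearest point, so in particular $x\notin C$, $d_C(x)>0$, and the minimization $\min_{z\in C}\|x-z\|$ is not well posed. For $x\notin C$ and $t>0$ I introduce the approximate projections $P_t(x)=\{z\in C:\|x-z\|\le d_C(x)+t\}$; these are nonempty, and since $X$ is complete and $C$ closed, a routine nested-set argument shows that if $\inf_{t>0}\mathrm{diam}\,P_t(x)=0$ then $x$ has a (unique) nearest point. Hence every $x\in X\setminus N(C)$ satisfies $\alpha(x):=\inf_{t>0}\mathrm{diam}\,P_t(x)>0$, and I would use the decomposition
\[
X\setminus N(C)\ \subseteq\ \bigcup_{m,j\ge 1}E_{m,j},\qquad E_{m,j}=\Big\{x\in X:\ \alpha(x)\ge\tfrac1m,\ \tfrac1j\le d_C(x)\le j\Big\}.
\]
Because a countable union of porous sets is $\sigma$-porous, it suffices to show that each $E_{m,j}$ is porous with a constant $c=c(m,j,\delta)$ depending only on $m,j$ and the modulus of convexity $\delta$ from \eqref{def uni conv}.

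\emph{Core geometric step.} Fix $x_0\in E_{m,j}$ and $\eps>0$. Since $\alpha(x_0)\ge 1/m$, I first pick a near-minimizer $z\in C$ whose slack $\theta:=\|x_0-z\|-d_C(x_0)$ is as small as desired, in particular $\theta\ll\eps$. With $a=\|x_0-z\|$ and $u=(z-x_0)/a$, I move toward $z$ by setting $x_s=x_0+su$ for a small $s\in(0,\eps)$. Then $\|x_s-z\|=a-s$, so $d_C(x_s)\le d_C(x_0)-s+\theta$, while the Lipschitz bound gives $d_C(x_s)\ge d_C(x_0)-s$. The heart of the matter is to prove, using uniform convexity, that any competitor $z'\in C$ with $\|z-z'\|\ge\rho$ (where $\rho<\tfrac1{2m}$ is fixed) retains a definite slack at $x_s$:
\[
\|x_s-z'\|-d_C(x_s)\ \ge\ \beta,\qquad \beta=\beta(s,\rho,j,\delta)>0.
\]
Granting this, for $t<\beta$ every point of $P_t(x_s)$ lies within $\rho$ of $z$, so $\mathrm{diam}\,P_t(x_s)\le 2\rho<1/m$ and thus $\alpha(x_s)<1/m$; as the estimate is quantitative it persists for every $w\in B(x_s,cs)$ once $c=c(m,j,\delta)$ is small, giving $B(x_s,cs)\cap E_{m,j}=\emptyset$. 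Taking $y=su$ (so $\|y\|=s<\eps$) is exactly the required porosity of $E_{m,j}$.

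\emph{The main obstacle} is the displayed competitor estimate, which is where uniform convexity must do all the work and where the non-convexity of $C$ bites. The tempting route—applying the modulus $\delta$ to the two near-minimizers $z,z'$ to force their midpoint close to $x_s$—is useless, because $\tfrac12(z+z')$ need not lie in $C$ and so carries no information about $d_C$. One must instead compare distances only to genuine points of $C$ and show directly that a step of length $s$ toward $z$ cannot shrink the distance to an off-axis $z'$ by nearly $s$. Writing $x_s-z'=(x_0-z')+su$, the decrease is almost maximal only when $u$ nearly points from $x_0$ toward $z'$ as well, i.e.\ when $z-x_0$ and $z'-x_0$ are nearly positively proportional; but any small-slack competitor satisfies $\|x_0-z'\|\le a+\beta$ and $\|x_0-z'\|\ge d_C(x_0)=a-\theta$, so the near-collinear case forces $\|z-z'\|\lesssim\max(\theta,\beta)\ll\rho$, a contradiction. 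Turning this dichotomy into the uniform lower bound $\beta$ is precisely the quantitative content of $\delta$: strict convexity alone yields positivity of the slack gain, whereas uniform convexity is what makes the gain uniform over all of $E_{m,j}$, and hence yields a single porosity constant.

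As a sanity check, in Hilbert space the whole estimate is elementary: by convexity of $t\mapsto\|x_0+tu-z'\|$ the decrease of the distance to $z'$ over a step $s$ is at most $s\cos\vartheta$, where $\vartheta=\angle(z-x_0,z'-x_0)$, and $\cos\vartheta\le 1-\tfrac12(\rho/d_C(x_0))^2$ whenever $\|z-z'\|\ge\rho$ and $\|x_0-z'\|\approx d_C(x_0)$; since the distance to $z$ decreases by exactly $s$, the slack of $z'$ grows by at least $\tfrac12 s\,(\rho/d_C(x_0))^2$, giving $\beta$ directly. The remaining work is then (i) replacing this Hilbertian cosine inequality by a genuine consequence of the modulus $\delta$ in a general uniformly convex norm—crucially \emph{without} assuming any smoothness of the norm, which uniform convexity does not supply—and (ii) verifying the estimate is stable for every $w\in B(x_s,cs)$. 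The bookkeeping of constants (choosing $\theta\ll s\ll\eps$ and fixing $\rho,\beta,c$ as functions of $m,j,\delta$) is routine once (i) and (ii) are in hand.
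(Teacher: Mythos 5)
For reference: the paper gives no proof of Theorem \ref{thm DMP} --- it is quoted from \cite{DMP91}, with only the remark that the proof builds on ideas of \cite{Ste63} (indeed the paper asks, as an open question, for an alternative differentiation-based proof). So your attempt can only be compared with the cited Ste\v{c}kin--De~Blasi--Myjak--Papini argument, and your plan is precisely that classical argument: stratify the ill-posed points by the well-posedness modulus $\alpha$ and by the size of $d_C$, then show each $E_{m,j}$ is porous by stepping towards a near-minimizer. Your decomposition is valid (Cantor's intersection theorem justifies $X\setminus N(C)\subseteq\bigcup_{m,j}E_{m,j}$), and your reduction of porosity to the displayed competitor estimate --- including the stability over $B(x_s,cs)$ --- is sound, \emph{provided} $\beta$ can be taken proportional to $s$ with a factor depending only on $m,j,\delta$.

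The genuine gap is the one you flag yourself: step (i), the competitor estimate, is never proved, and it is not a technicality --- it is the only place uniform convexity enters, i.e., it is the theorem. As written, the proposal is a correct strategy with its central lemma missing. The lemma does hold, with $\beta$ linear in $s$, and your collinearity dichotomy becomes quantitative, with no smoothness at all, via near-equality in the triangle inequality: if $v,w\neq0$ satisfy $\|v+w\|\ge\|v\|+\|w\|-\eps_0$, then with $r=\min(\|v\|,\|w\|)$, $\hat v=v/\|v\|$, $\hat w=w/\|w\|$,
\[
\left\|\frac{\hat v+\hat w}{2}\right\|\ \ge\ 1-\frac{\eps_0}{2r},
\qquad\text{hence}\qquad
\delta\big(\|\hat v-\hat w\|\big)\ \le\ \frac{\eps_0}{2r}
\]
(assume $\|v\|=r$, write $\hat v+\hat w=\frac{v+w}{r}-\big(\frac{w}{r}-\hat w\big)$ and note $\big\|\frac{w}{r}-\hat w\big\|=\frac{\|w\|}{r}-1$). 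Apply this to $v=x_0-x_s=-su$ and $w=x_s-z'$, where $z'\in C$ has slack $<\beta$ at $x_s$: then $\|v+w\|=\|x_0-z'\|\ge a-\theta$ while $\|v\|+\|w\|<a+\beta$, so $\eps_0=\theta+\beta$ works, and $r=s$ once $s\le\frac1{2j}$. Take $\rho=\frac1{4m}$ and $\theta=\beta=\frac s2\,\delta\big(\frac{\rho}{2(j+1)}\big)$; then $\frac{\eps_0}{2s}<\delta\big(\frac{\rho}{2(j+1)}\big)$, so monotonicity of $\delta$ forces $\|q-u\|=\|\hat v-\hat w\|<\frac{\rho}{2(j+1)}$, where $q$ is the unit vector from $x_s$ towards $z'$. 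Since $\|x_s-z'\|\le j+1$ and $\big|\|x_s-z'\|-(a-s)\big|\le\theta+\beta$, the identity $z'-z=\|x_s-z'\|(q-u)+\big(\|x_s-z'\|-(a-s)\big)u$ yields $\|z'-z\|<\frac\rho2+\theta+\beta<\rho$ for $s<\rho/2$, which is exactly your contradiction. Thus $\beta$ is admissible and linear in $s$, the porosity constant $c=\beta/(4s)=\frac18\delta\big(\frac{\rho}{2(j+1)}\big)$ depends only on $m,j,\delta$, and your step (ii) follows since far competitors keep slack $\ge\beta-2cs=\beta/2$ at every $w\in B(x_s,cs)$. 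One last routine patch: the paper's porosity definition quantifies over all $x\in X$, not only over $x\in E_{m,j}$; this is harmless here because your construction produces a missing ball at \emph{every} scale $s\le s_0(m,j,\delta)$ near every point of $E_{m,j}$, from which the condition at arbitrary $x\in X$ follows by standard arguments.
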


In fact, \cite{DMP91} proved a stronger result, namely that for every $x$ outside a $\sigma$-porous set, the minimization problem is \emph{well posed}, i.e., there is unique minimizer to which every minimizing sequence converges. See also \cite{FP91, RZ11, RZ12} for closely related results in this direction. 

The proof of Theorem \ref{thm DMP} builds on ideas developed in \cite{Ste63}. However, it would be interesting to know whether one could use differentiation arguments as in Section \ref{sec diff}. This raises the following question:

\begin{qst}
Can differentiation arguments be used to give an alternative proof of Theorem \ref{thm DMP}?
\end{qst}

More specifically, if one can show that $\partial d_C \neq \emptyset$ outside a $\sigma$-porous set, then by the arguments presented in Section \ref{sec diff}, it would follow that $N(C)$ has a $\sigma$-porous complement. Next, we mention two important results regarding differentiation in Banach spaces. 
 
\begin{thm}[Preiss-Zaj\'i\v{c}ek \cite{PZ84}]\label{thm PZ}
If $X$ has a separable dual and $f:X\to \R$ is continuous and convex, then $X$ is Fr\'echet differentiable outside a $\sigma$-porous set.
\end{thm}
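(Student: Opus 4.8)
The plan is to show that the set where $f$ fails to be Fr\'echet differentiable is a countable union of porous sets. Since $f$ is continuous and convex it is locally Lipschitz and $\partial f(x)\neq\emptyset$ at every $x$ (where $\partial f$ coincides with the convex subdifferential), so for $y^*\in\partial f(x)$ the quantity $f(x+y)+f(x-y)-2f(x)$ is nonnegative. Writing $\Delta(x,y)=f(x+y)+f(x-y)-2f(x)$, the standard characterization of Fr\'echet differentiability for convex functions states that $f$ is differentiable at $x$ if and only if $\Delta(x,y)/\|y\|\to 0$ as $y\to 0$. Consequently the non-differentiability set equals $\bigcup_{k\ge 1}A_k$, where $A_k=\{x:\limsup_{y\to 0}\Delta(x,y)/\|y\|\ge 1/k\}$, and it suffices to prove that each $A_k$ is $\sigma$-porous. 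I would fix $\eps=1/k$ and work with $A_\eps=A_k$.

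First I would convert the primal gap into a jump of the subdifferential. If $\Delta(x,y)\ge\eps\|y\|$ and $y^*\in\partial f(x+y)$, $w^*\in\partial f(x-y)$, then the subgradient inequalities $f(x)\ge f(x+y)-y^*(y)$ and $f(x)\ge f(x-y)+w^*(y)$ give $\Delta(x,y)\le (y^*-w^*)(y)$, hence $(y^*-w^*)(y)\ge\eps\|y\|$. Thus membership in $A_\eps$ is witnessed, along a sequence $y\to 0$, by a genuine $\eps$-jump of the monotone operator $\partial f$ across the segment $[x-y,x+y]$. Using the separability of $X^*$, I would fix a countable dense set $\{e_j^*\}\subseteq X^*$ together with dyadic scales $2^{-m}$, and approximate $w^*$ by some $e_j^*$, thereby decomposing $A_\eps\subseteq\bigcup_{j,m}A_{\eps,j,m}$, where on $A_{\eps,j,m}$ the jump is detected by the single fixed functional $e_j^*$ at scale comparable to $2^{-m}$. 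The purpose of this step is to replace the supremum over all unit directions by one fixed linear functional, which is what makes a clean porosity estimate available.

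The core of the argument is to show that each $A_{\eps,j,m}$ is porous. Here I would use the monotonicity of $\partial f$: once the fixed functional $e_j^*$ evaluated along a subgradient has increased by $\sim\eps$ as one crosses $x$, the same $\eps$-jump cannot recur at points displaced from $x$ by a vector $y$ whose size is proportional to the gain already realized. Monotonicity then forces a whole ball $B(x+y,c\|y\|)$, with $c=c(\eps)>0$, to avoid $A_{\eps,j,m}$, which is exactly the porosity condition. This is the Banach-space incarnation of the elementary one-dimensional fact that a monotone function on a bounded interval can have only finitely many jumps of size at least $\eps$; the separable dual is precisely what permits the reduction to such a one-dimensional mechanism along the fixed direction determined by $e_j^*$.

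The step I expect to be the main obstacle is this last one: producing a porosity constant $c$ that is independent of $x$ and of the scale. The difficulty is that the direction realizing $\Delta(x,y)\ge\eps\|y\|$ may vary with $\|y\|$, so the reduction to the fixed functional $e_j^*$ must be arranged so that the excluded ball has radius \emph{genuinely proportional} to its displacement from $x$, rather than merely being nonempty. Converting the dual-space jump into a quantitative lower bound for $\Delta$ that persists on an entire ball, uniformly in $x$ and in $m$, by exploiting the convexity and local Lipschitz property of $f$, is where the real work lies.
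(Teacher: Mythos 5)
The paper itself offers no proof of this statement; it is quoted from \cite{PZ84} (see also \cite[Sec.~3.3]{LPT12}), so your attempt can only be measured against the known argument. Your preliminary steps are correct and do follow that argument: the symmetric-quotient criterion for Fr\'echet differentiability of convex functions, the decomposition of the non-differentiability set into the sets $A_{1/k}$, and the estimate $\Delta(x,y)\le (y^*-w^*)(y)$ for $y^*\in\partial f(x+y)$, $w^*\in\partial f(x-y)$ are all valid.

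The genuine gap lies in the decomposition $A_\eps\subseteq\bigcup_{j,m}A_{\eps,j,m}$, and it undermines precisely the porosity step you flag as ``the real work''. You attach the approximating functional $e_j^*$ to $w^*\in\partial f(x-y)$ --- an object depending on the witness $y$ --- and you index the pieces by the dyadic scale of $\|y\|$. If membership in $A_{\eps,j,m}$ requires only one witness at scale $2^{-m}$, the cover of $A_\eps$ holds, but then your monotonicity mechanism yields, around each point of $A_{\eps,j,m}$, a single excluded ball $B(x+y,c\|y\|)$ at displacement $\|y\|\sim 2^{-m}$, whereas porosity demands excluded balls at \emph{arbitrarily small} displacements. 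Holes at one scale prove nothing: in $\R$, the union of intervals of length $2^{-m-10}$ spaced $2^{-m}$ apart admits such holes at every point, yet has positive measure and so is not even $\sigma$-porous. If instead membership requires a single $e_j^*$ to work for arbitrarily small witnesses, then the cover itself fails: the witness functionals $w^*\in\partial f(x-y)$, $y\to 0$, form a bounded sequence which in an infinite-dimensional dual need not have any norm-cluster point, so no fixed $e_j^*$ need stay $\eps/8$-close to witnesses at arbitrarily small scales. The repair --- and this is what \cite{PZ84} actually does --- is to attach the functional to $x$ itself: pick $x^*\in\partial f(x)$, observe that $\Delta(x,y)\ge\eps\|y\|$ forces $f(x\pm y)-f(x)\mp x^*(y)\ge\tfrac{\eps}{2}\|y\|$ for at least one sign (both terms are nonnegative and sum to $\Delta(x,y)$), and let $A_{\eps,j}$ consist of those $x\in A_\eps$ admitting some $x^*\in\partial f(x)$ with $\|x^*-e_j^*\|\le\eps/8$. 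Every point of $A_{\eps,j}$ then retains witnesses at all small scales, and the subgradient inequalities together with a local Lipschitz bound $L$ show that any $z\in B(x+y,c\|y\|)$ possessing a subgradient within $\eps/4$ of $x^*$ produces a contradiction once $c<\eps/(8L+\eps)$; hence $B(x+y,c\|y\|)\cap A_{\eps,j}=\emptyset$, which is porosity with a constant depending only on $\eps$ and $L$. Finally, your worry that $c$ must be uniform in $x$ is a non-issue: intersect each $A_{\eps,j}$ with countably many balls on which $f$ has a fixed Lipschitz constant ($X$ is separable because $X^*$ is), and invoke the countable stability of $\sigma$-porosity.
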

 
See also \cite[Sec. 3.3]{LPT12}. Theorem \ref{thm PZ} implies that if, for example, $d_C$ is a linear combination of convex functions (see more on this in Section \ref{sec DC}), then $N(C)$ has a $\sigma$-porous complement. Also, we have the following.

\begin{thm}[C\'uth-Rmoutil \cite{CR13}]\label{thm CR}
If $X$ has a separable dual and $f:X\to \R$ is Lipschitz, then the set of points where $f$ is Fr\'echet sub-differentiable but not differentiable is $\sigma$-porous.
\end{thm}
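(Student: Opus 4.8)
Write $E$ for the set in question, the points at which $f$ is Fr\'echet sub-differentiable but not Fr\'echet differentiable, and let $L$ be a Lipschitz constant for $f$. The plan is to exhibit $E$ as a countable union of porous sets. The first observation is a clean dichotomy. If $x \in E$, pick $x^* \in \partial f(x)$; since $f$ is $L$-Lipschitz one has $\|x^*\| \le L$, and because $f$ fails to be differentiable at $x$ (so the quotient has no limit) the lower bound cannot be two-sided. Hence
\[
\liminf_{y\to 0}\frac{f(x+y)-f(x)-x^*(y)}{\|y\|}\ge 0
\quad\text{while}\quad
\limsup_{y\to 0}\frac{f(x+y)-f(x)-x^*(y)}{\|y\|}=:2c>0 .
\]
Thus sub-differentiability provides a lower ``floor'' on the remainder $f(x+\cdot)-f(x)-x^*(\cdot)$, while non-differentiability forces, on arbitrarily small scales, ``spikes'' on which the remainder is comparably positive. (Lipschitzness also bounds the quotient by $2L$, so $c<\infty$.)

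The hypothesis that $X^*$ is separable is used precisely to make the choice of $x^*$ countable. Fix a countable dense subset $\{x_m^*\}_{m\in\mathbb N}$ of the closed ball of radius $L$ in $X^*$, and for positive integers $m,j,p,k$ with $\tfrac1k>\tfrac1j$ let $E_{m,j,p,k}$ consist of those $x$ satisfying both a floor condition, namely $f(x+y)-f(x)-x_m^*(y)\ge -\tfrac1j\|y\|$ whenever $\|y\|\le\tfrac1p$, and a spike condition, namely that for every $r\in(0,\tfrac1p]$ there is $y$ with $0<\|y\|\le r$ and $f(x+y)-f(x)-x_m^*(y)\ge\tfrac1k\|y\|$. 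Given $x\in E$ with sub-gradient $x^*$, approximate $x^*$ by some $x_m^*$ with $\|x^*-x_m^*\|$ small; since switching from $x^*$ to $x_m^*$ perturbs the quotient by at most $\|x^*-x_m^*\|$, the floor survives as $-\tfrac1j$ and the spike survives as $+\tfrac1k$ provided the approximation error and the tolerance in the liminf are chosen small relative to $c$, and one can then arrange $\tfrac1k>\tfrac1j$ (for instance with $j=k+1$) on a common scale $\tfrac1p$ below the relevant $\delta$. Hence $E\subseteq\bigcup E_{m,j,p,k}$, a countable union.

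It remains to show each $E:=E_{m,j,p,k}$ is porous, which is now a purely metric matter. Put $L'=L+\|x_m^*\|$, so that $u\mapsto f(u)-x_m^*(u)$ is $L'$-Lipschitz. Fix $x_0\in E$ and $\epsilon>0$, and use the spike condition at scale $r=\min(\epsilon,\tfrac1{2p})$ to select $y_0$ with $0<\|y_0\|\le r$ and $f(x_0+y_0)-f(x_0)-x_m^*(y_0)\ge\tfrac1k\|y_0\|$. If $z\in E$ satisfies $\|z-(x_0+y_0)\|\le\rho$ for some $\rho\le\|y_0\|$, then $\|x_0-z\|\le 2\|y_0\|\le\tfrac1p$, so the floor condition at $z$ (applied to the increment $x_0-z$) yields $f(z)-x_m^*(z)\le f(x_0)-x_m^*(x_0)+\tfrac1j\|x_0-z\|$, while $L'$-Lipschitzness gives $f(x_0+y_0)-x_m^*(x_0+y_0)\le f(z)-x_m^*(z)+L'\rho$. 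Chaining these against the spike inequality cancels $f(x_0)-x_m^*(x_0)$ and produces
\[
\Big(\tfrac1k-\tfrac1j\Big)\|y_0\|\le\Big(\tfrac1j+L'\Big)\rho ,
\]
so that $\rho\ge c_0\|y_0\|$ with $c_0=\big(\tfrac1k-\tfrac1j\big)\big/\big(\tfrac1j+L'\big)>0$. Hence no point of $E$ lies within $\tfrac12 c_0\|y_0\|$ of $x_0+y_0$, i.e.\ the ball $B\big(x_0+y_0,\tfrac12 c_0\|y_0\|\big)$ misses $E$, exhibiting porosity of $E$ at $x_0$ with the uniform constant $\min\{\tfrac12 c_0,\tfrac12\}\in(0,1)$; by the standard theory of porous sets (cf.\ \cite{Zaj05, LPT12}) this makes $E$ porous. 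Summing over the countably many admissible tuples shows $E$ is $\sigma$-porous, as claimed.

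Both hypotheses enter transparently: \emph{Lipschitz} continuity bounds the sub-gradients into a single separable ball and supplies the value comparison in the porosity estimate, while the \emph{separable dual} converts the existential ``there is a sub-gradient $x^*$'' into the countable family $\{x_m^*\}$ indexing the decomposition. I expect the delicate step to be the parameter chase that guarantees $E\subseteq\bigcup E_{m,j,p,k}$ with the strict inequality $\tfrac1k>\tfrac1j$ preserved: one must control the dual approximation error and the liminf tolerance simultaneously so that the positive spike still strictly dominates the negative floor on a scale common to both. Once the decomposition is in place, the countability of the cover and the porosity of each piece are comparatively mechanical.
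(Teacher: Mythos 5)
First, context: the paper itself gives no proof of this theorem --- it is quoted from \cite{CR13} --- so your argument can only be measured against the standard proof from the literature, which it does closely resemble (a Preiss--Zaj\'i\v{c}ek-style scheme). Your core computations are sound. The dichotomy at a point of $E$ is correct: Lipschitzness forces $\|x^*\|\le L$ and bounds the quotient, and non-differentiability forces a strictly positive limsup. The covering $E\subseteq\bigcup E_{m,j,p,k}$ does work with $j=k+1$, provided one chooses the limsup tolerance, the liminf tolerance and the dual approximation error $\eta=\|x^*-x_m^*\|$ all smaller than both $c/2$ and $1/(2(k+1))$; this is the ``parameter chase'' you flagged, and it goes through. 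The porosity estimate is also correct: applying the floor at $z$ to the increment $x_0-z$, the $L'$-Lipschitz bound, and the spike at $x_0$ chain exactly as you say to give $(1/k-1/j)\|y_0\|\le(1/j+L')\rho$, whence a hole of radius proportional to $\|y_0\|$ around $x_0+y_0$.

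The genuine gap is the final sentence of your porosity step. What you prove is that each $E_{m,j,p,k}$ is porous \emph{at each of its own points} with a uniform constant; but the paper's definition of a porous set demands the hole condition at \emph{every} $x\in X$, equivalently at every point of $\overline{E_{m,j,p,k}}$ (equivalently: $\sigma$-porosity in the paper's sense means coverability by countably many \emph{closed} sets porous at all of their points). Your argument produces nothing at a point $x'\in\overline{E_{m,j,p,k}}\setminus E_{m,j,p,k}$: the spike condition is not stable under limits, because if $x_n\to x'$ the spike scales $\|y_n\|$ available at $x_n$ may collapse much faster than $\|x_n-x'\|$, and then the holes they generate have radius negligible compared with their distance from $x'$. (Your floor set is closed --- it is an intersection of closed conditions --- but your spike set is not, and that is exactly where the trouble sits.) Nor can this be papered over by ``standard theory'': porosity at each point of a set, even with a uniform constant, does \emph{not} in general imply porosity, or even $\sigma$-porosity, in the paper's global sense --- one can construct a compact perfect $K\subseteq\R$, by nesting grids in shrinking annuli around a countable dense set of centres, whose non-porosity points are dense in $K$ while its porosity points are co-countable; the set of porosity points is then porous at each of its points, yet a Baire-category argument inside $K$ shows it admits no cover by countably many closed porous sets. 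So your proof is essentially a complete proof of the theorem for the pointwise notion of porosity actually used in \cite{CR13} and \cite{LPT12}, but as a proof of the statement read against this paper's definition it has a real hole at the closure points, and closing it requires a genuinely additional argument (e.g.\ redesigning the pieces so that their closures remain porous), not merely a citation.
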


Since $d_C$ is 1-Lipschitz, the questions of seeking points of sub-differentiability or points of differentiability are similar. Theorem \ref{thm PZ} and Theorem \ref{thm CR} remain true if we consider $f:A\to \R$ where $A\subseteq X$ is open and convex.

\bigskip

\section{DC functions and DC sets}\label{sec DC}
 
\subsection{Background}

\begin{defin}
A function $f:X\to \R$ is said to be delta-convex, or DC, if it can be written as a difference of two convex functions on $X$. 
\end{defin}

This notion was introduced in \cite{Har59} and was later studied by many authors. See for example \cite{KM90, Cep98, Dud01, VZ01, DVZ03, BZ05, Pav05, BB11}. In particular, \cite{BB11} gives a good introduction to this topic. We will discuss here only the parts that are closely related to the nearest point problem.
 
The following is an important proposition. See for example \cite{VZ89, HPT00} for a proof.
 
\begin{prop}\label{prop select}
If $f_1,\dots,f_k$ are DC functions and $f:X\to \R$ is continuous and $f(x)\in \big\{f_1(x),\dots,f_n(x)\big\}$. Then $f$ is also DC.
\end{prop}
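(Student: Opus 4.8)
The plan is to prove the statement through a short chain of reductions, isolating a single two-function ``gluing'' lemma as the real content. First I would remove the difference structure. Writing each $f_i=g_i-h_i$ with $g_i,h_i$ convex, set $H=\sum_j h_j$ and $G_i=g_i+\sum_{j\neq i}h_j$, so that every $G_i$ is convex and $f_i=G_i-H$ with a \emph{common} subtracted convex function $H$. Since $H$ is convex and
\[ f(x)+H(x)\in\{G_1(x),\dots,G_n(x)\}\qquad(x\in X), \]
it suffices to treat the case in which all the $f_i$ are convex. I would also record the two facts used repeatedly: $\max_iG_i$ is convex, and $\min_iG_i=\sum_jG_j-\max_i\sum_{j\neq i}G_j$ is a difference of convex functions, so finite lattice operations preserve DC. It is convenient to pass to the control reformulation: a continuous $\phi$ is DC precisely when there is a continuous convex $g$ with $g+\phi$ and $g-\phi$ both convex (take $g=a+b$ from $\phi=a-b$; conversely $\phi=\tfrac12[(g+\phi)-(g-\phi)]$). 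The goal then becomes the construction of such a control for the selection.

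The essential case is $n=2$. Given a continuous selection $\phi$ of two convex functions $A,B$, set $g_0=A-B$ (which is DC) and $h=\phi-B$; then $h$ is continuous, $h(x)\in\{0,g_0(x)\}$ for every $x$, and $\phi$ is DC if and only if $h$ is. Splitting $h=\max(h,0)+\min(h,0)$ and treating the summands separately reduces matters to a nonnegative obstacle: a continuous $h$ with $0\le h\le w$ and $h(x)\in\{0,w(x)\}$, where $w\ge0$ is DC. Here the geometry is transparent: the closed sets $\{h=w\}$ and $\{h=0\}$ cover $X$ and meet only in $\{w=0\}$, so on the open set $\{w>0\}$ they are complementary relatively clopen sets. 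Hence $\{h=w\}\cap\{w>0\}$ is a union $U$ of connected components of $\{w>0\}$, and $h$ equals $w$ on $\overline U$ and $0$ elsewhere, continuity being automatic since $w=0$ on $\partial U$.

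The hard part will be the final step: producing, from a DC representation of $w$, a \emph{global} DC (equivalently, control) representation of this $h$. The only possible failure of convexity of $h$ is concentrated on the switching set $\partial U\subseteq\{w=0\}$ (and, in the unreduced two-function picture, on $\{A=B\}$), where the selection may form concave corners; a correct control $g$ must carry exactly the convex corners needed to absorb them, uniformly across $\{w=0\}$. In finite dimensions this obstacle evaporates: a locally DC function on a convex set is globally DC, and in one variable DC is simply the assertion that the distributional second derivative is a signed measure, so the decomposition is read off from its Jordan decomposition. The genuine difficulty is that in infinite dimensions local DC behaviour need not globalize, so I would build $g$ directly from the coincidence structure rather than patch local representations. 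The general $n$ is then handled by the same control-function method, accounting for all pairwise coincidence sets $\{G_i=G_j\}$ simultaneously (cf. \cite{VZ89, HPT00}).
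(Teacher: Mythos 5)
Your preliminary reductions are fine: the common subtrahend $H$, the control-function reformulation, the splitting $h=\max(h,0)+\min(h,0)$, and the identification of the two-function case with the obstacle form $h=w\,\mathbbm{1}_{\overline{U}}$, where $w\ge 0$ is DC and $U$ is a union of components of $\{w>0\}$, are all correct (granting that the convex functions involved are continuous, which your topological argument needs). But they only \emph{normalize} the problem: the obstacle form is the two-function mixing lemma restated, and the entire content of Proposition~\ref{prop select} --- producing a convex $g$ with $g\pm h$ convex --- is exactly the step you label ``the hard part'' and then do not carry out. ``I would build $g$ directly from the coincidence structure'' is a declaration of intent, not an argument. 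Moreover, your reduction of general $n$ to $n=2$ is likewise only asserted: ``accounting for all pairwise coincidence sets simultaneously'' is not a proof, and the natural attempts do not close an induction (for instance $\max(f,G_n)$ is a continuous selection of the $n$ functions $\max(G_1,G_n),\dots,\max(G_{n-1},G_n),G_n$, so nothing has been reduced). For what it is worth, the paper itself gives no proof either --- it refers to \cite{VZ89,HPT00} --- and what those sources supply is precisely the two items missing from your outline: a control function for the selection, constructed globally from the data $G_1,\dots,G_n$ (e.g.\ via their pairwise maxima) and verified along segments across the switching set, for arbitrary $n$.

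Second, your diagnosis of the difficulty is misplaced. Locally DC functions on an open \emph{convex} subset of a normed linear space are DC --- this is Hartman's theorem \cite{Har59} in $\R^n$, and it extends to all normed spaces \cite{VZ89,KM90} --- so ``in infinite dimensions local DC behaviour need not globalize'' is not the obstruction (globalization is an issue only on non-convex domains, which is why Theorem~\ref{thm local DC} is stated locally on $X\setminus C$). Conversely, this local-to-global principle does not make the finite-dimensional case ``evaporate'': to invoke it you must first know that $h$ is \emph{locally} DC at points of $\partial U$, and that is precisely what is unknown, because $\overline{U}$ is neither open nor convex, so near a switching point $h$ does not coincide with any single DC function on a whole neighborhood. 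Your one-dimensional argument via the Jordan decomposition of $h''$ is essentially sound (one must still check that the upward jumps of $h'$ created at $\partial U$ are locally summable, which uses $w\ge 0$), but it is special to $\R$: the proposition is already nontrivial in $\R^2$. In short, the gap is not an infinite-dimensional technicality to be quarantined in a final step; it is the theorem itself, in every dimension greater than one.
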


The result is true if we replace the domain $X$ by any convex subset.

\medskip

\subsection{DC functions and nearest points}

Showing that a given function is in fact DC is a powerful tool, as it allows us to use many known results about convex and DC functions. For example, if a function is DC on a Banach space with a separable dual, then by Theorem \ref{thm PZ}, it is differentiable outside a $\sigma$-porous set. In the context of the nearest point problem, if we know that the distance function is DC, then using the scheme presented in Section \ref{sec diff}, it would follow that $N(C)$ has a $\sigma$-porous complement. The same holds if we have a difference of a convex function and, say, a smooth function.

\medskip
 
The simplest and best known example is when $(X,\|\cdot\|)$ is a Hilbert space, where we have the following.
 
\begin{align*}
d_C^2(x) & = \inf_{y\in C}\|x-y\|^2
\\ & = \inf_{y\in C}\Big[\|x\|^2-2\langle x,y\rangle +\|y\|^2\Big]
\\ & = \|x\|^2 - 2\sup_{y\in C}\Big[\langle x,y\rangle - \|y\|^2/2\Big],
\end{align*}
and the function $x\mapsto  \sup_{y\in C}\Big[\langle x,y\rangle - \|y\|^2/2\Big]$ is convex as a supremum of affine functions. Hence $d_C^2$ is DC on $X$.
Moreover, in a Hilbert space we have the following result (see \cite[Sec. 5.3]{BZ05}).

\begin{thm}\label{thm local DC}
If $(X, \|\cdot\|)$ is a Hilbert space, $d_C$ is locally DC on $X\setminus C$.
\end{thm}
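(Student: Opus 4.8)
The plan is to deduce the local statement from the global fact already established in the excerpt, that $d_C^2$ is DC on all of $X$, by composing with the square root. Concretely, write $d_C^2 = p - q$ on $X$ with $p(x) = \|x\|^2$ and $q(x) = 2\sup_{y\in C}[\langle x,y\rangle - \|y\|^2/2]$, both convex and continuous. The square root is only well behaved away from $0$, so I would localize: fix $x_0 \in X\setminus C$ and put $r = d_C(x_0) > 0$. Since $d_C$ is $1$-Lipschitz, on the open (convex) ball $U = B(x_0, r/3)$ one has $d_C \in [2r/3, 4r/3]$, hence $h := d_C^2$ takes values in a compact interval $[a,b]\subset(0,\infty)$ and is Lipschitz on $U$, say with constant $L$. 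On $U$ we have $d_C = \phi\circ h$ with $\phi(t)=\sqrt{t}$, and because $a>0$ the function $\phi$ is $C^2$ on $[a,b]$ with $0<\phi'\le M'$ and $|\phi''|\le K$ there.

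The key step is to exhibit a convex control function for $d_C$ on $U$. I would invoke the standard characterization (see \cite{VZ89}) that a continuous $F$ is DC on the convex set $U$ if and only if there is a continuous convex $G$ with $|\Delta F|\le \Delta G$, where $\Delta F := \lambda F(x)+(1-\lambda)F(y)-F(\lambda x+(1-\lambda)y)$ is the convexity defect. Writing $t_x=h(x)$, $t_y=h(y)$, $\bar t=\lambda t_x+(1-\lambda)t_y$, I would split $\Delta(\phi\circ h)$ into the concavity defect of $\phi$ on $[t_x,t_y]$, namely $\lambda\phi(t_x)+(1-\lambda)\phi(t_y)-\phi(\bar t)$, plus a remainder $\phi(\bar t)-\phi(h(\lambda x+(1-\lambda)y)) = \phi'(\xi)\,\Delta h = \phi'(\xi)(\Delta p - \Delta q)$ for some $\xi\in[a,b]$. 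The first term is nonpositive and bounded in absolute value by $\tfrac{K}{2}\lambda(1-\lambda)(t_x-t_y)^2 \le \tfrac{KL^2}{2}\lambda(1-\lambda)\|x-y\|^2$. At this point I would use the Hilbert-space parallelogram identity $\Delta p = \lambda(1-\lambda)\|x-y\|^2$, which converts the quadratic error into the genuine convexity defect $\Delta p$ and is the only place (besides the DC-ness of $d_C^2$) where the inner product is used. Since $0<\phi'(\xi)\le M'$ and $\Delta p,\Delta q\ge 0$, combining the two pieces yields $|\Delta(\phi\circ h)| \le (M'+\tfrac{KL^2}{2})\Delta p + M'\Delta q = \Delta G$, where $G := (M'+\tfrac{KL^2}{2})\,p + M'\,q$ is convex. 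By the characterization, $d_C$ is DC on $U$, and since $x_0$ was arbitrary, $d_C$ is locally DC on $X\setminus C$.

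The main obstacle is precisely the concavity of $\phi=\sqrt{\cdot}$: because the outer function bends the wrong way, a composition of a DC function with it is not DC in general, and one genuinely needs $h\ge a>0$ both to keep $|\phi''|$ finite and to absorb the resulting quadratic term into $\Delta p$ via the parallelogram law. This is also what breaks down at points of $C$, where $d_C=0$ and $\phi'$ blows up, which is exactly why the conclusion is only local on $X\setminus C$. (Alternatively, one could quote a general theorem that the composition of a Lipschitz DC function with a $C^{1,1}$ function of one real variable is DC, but the control-function computation above is self-contained and makes the role of the Hilbert structure transparent.)
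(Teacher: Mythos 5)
Your proof is correct, but it takes a genuinely different route from the paper's. The paper argues directly with the family $f_y(x)=\|x-y\|$, $y\in C$: on the ball $B\big(x_0,\frac{1}{2}d_C(x_0)\big)$ each $f_y$ has a derivative that is Lipschitz with a constant $L_{x_0}$ depending only on $d_C(x_0)$ (in particular, independent of $y$), so subtracting $f_y$ from the strongly convex quadratic $F=\frac{L_{x_0}}{2}\|\cdot\|^2$ makes each $g_y=F-f_y$ convex there, and then $d_C=F-\sup_{y\in C}g_y$ exhibits $d_C$ as DC on that ball. You instead take the globally valid decomposition $d_C^2=p-q$ as your starting point, localize to a ball $U$ where $d_C^2\in[a,b]\subset(0,\infty)$, and handle the concave outer function $\phi=\sqrt{\cdot}$ by a control-function (convexity-defect) argument: the defect of $\phi\circ h$ splits into the concavity defect of $\phi$, bounded by $\frac{K}{2}\lambda(1-\lambda)(t_x-t_y)^2\le\frac{KL^2}{2}\,\Delta p$, plus $\phi'(\xi)(\Delta p-\Delta q)$, giving $|\Delta(\phi\circ h)|\le\big(M'+\frac{KL^2}{2}\big)\Delta p+M'\Delta q=\Delta G$; the Hilbert identity $\Delta p=\lambda(1-\lambda)\|x-y\|^2$ is exactly what lets the quadratic error be absorbed into $\Delta p$. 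Note that you only need the easy direction of the Vesel\'y--Zaj\'i\v{c}ek characterization, since $d_C=\frac{1}{2}(G+d_C)-\frac{1}{2}(G-d_C)$ with both summands convex and continuous on $U$, so your argument is self-contained as claimed. As for what each approach buys: the paper's proof deliberately isolates the two Hilbertian ingredients --- a Lipschitz derivative of the norm away from a point, and the existence of a strongly convex function --- because these are precisely the hypotheses interrogated afterwards in Proposition \ref{prop Hilbert} and the remarks following it, where it is shown that this scheme essentially forces the space to be Hilbertian. Your proof is more modular: it reduces local DC-ness of $d_C$ to global DC-ness of $d_C^2$ plus a one-variable composition estimate, and would therefore transfer to any setting in which some power $d_C^{\alpha}$ is known to be DC and locally bounded away from zero --- which is exactly the shape of Question \ref{qst DC} --- at the price of obscuring which geometric properties of the norm are actually being used.
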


\begin{proof}
Fix $y\in C$ and $x_0\in X\setminus C$. It can be shown that if we let $f_y(x) = \|x-y\|$, then $f_y$ satisfies
\begin{align*}
\big\|f_y'(x_1)-f_y'(x_2)\big\|_{X^*} \le L_{x_0}\|x_1-x_2\|, ~~ x_1,x_2 \in B_{x_0},
\end{align*}
where $L_{x_0} = \frac 4 {d_S(x_0)}$ and $B_{x_0} = B\Big(x_0, \frac 1 2 d_C(x_0)\Big)$. In particular,
\begin{align}\label{lip prop}
\big(f_y'(x+tv_1)-f_y'(x+t_2v)\big)(v) \le L_{x_0}(t_2-t_1), ~~ v\in \mathbb S_X, t_2> t_1 \ge 0,
\end{align}
whenever $x+t_1v, x+t_2v \in B_{x_0}$. Next, the convex function $F(x) = \frac {L_{x_0}} 2 \|x\|^2$ satisfies
\begin{align}\label{anti lip hilbert}
\big(F'(x_1)-F'(x_2)\big)(x_1-x_2) \ge L_{x_0}\|x_1-x_2\|^2, ~~ \forall x_1,x_2\in X.
\end{align}
In particular
\begin{align}\label{anti lip}
\big(F'(x+t_2v)-F'(x+t_1v)\big)(v) \ge L_{x_0}(t_2-t_1),  ~~ v\in \mathbb S_X, ~t_2>t_1\ge 0.
\end{align}
Altogether, if $g_y(x) = F(x)-f_y(x)$, then
\begin{align*}
\big(g_y'(x+t_2v)-g_y'(x+t_1v)\big)(v) \stackrel{\eqref{lip prop}\wedge \eqref{anti lip}}{\ge} 0, ~~ v\in \mathbb S_X, ~t_2>t_1\ge 0,
\end{align*}
whenever $x+t_1v, x+t_2v \in B_{x_0}$. This implies that $g_y$ is convex on $B_{x_0}$. It then follows that
\begin{align*}
d_C(x) & = \frac {L_{x_0}} 2 \|x\|^2 -\sup_{y\in C}\Bigg[~\frac {L_{x_0}} 2 \|x\|^2-\|x-y\|\Bigg] = h(x) - \sup_{y\in C}g_y(x)
\end{align*}
is DC on $B_{x_0}$. 
\end{proof}

\begin{remark}
Even in $\R^2$ there are sets for which $d_C$ is not DC everywhere (not even locally DC), as was shown in \cite{BB11}. Thus, the most one could hope for is a locally DC function on $X\setminus C$.
\end{remark}

\medskip

Given $q\in (0,1]$, a norm $\|\cdot\|$ is said to be $q$\emph{-H\"older smooth} at a point $x\in X$ if there exists a constant $K_x\in (0,\infty)$ such that for every $y\in \mathbb S_X$ and every $\tau>0$,
\begin{align*}
\frac{\|x+\tau y\|}{2} + \frac{\|x-\tau y\|}{2} \le 1+K_x\tau^{1+q}.
\end{align*}
If $q=1$ then $(X,\|\cdot\|)$ is said to be \emph{Lipschitz smooth} at $x$.
The spaces $L_p$, $p \ge 2$ are known to be Lipschitz smooth, and in general $L_p$, $p>1$, is $s$-H\"older smooth with $s = \min\{1,p-1\}$. 

\medskip

A Banach space is said to be $p$\emph{-uniformly convex} if for every $x,y\in \mathbb S_X$,
\begin{align*}
1-\left\|\frac{x+y}{2}\right\| \ge L\|x-y\|^p.
\end{align*}
Note that this is similar to assuming that $\delta(\epsilon) = L\epsilon^p$ in \eqref{def uni conv}. The spaces $L_p$, $p>1$, are $r$-uniformly convex with $r = \max\{2,p\}$. 
 
One could ask whether the scheme of proof of Theorem \ref{thm local DC} can be used in a more general setting.

\begin{prop}\label{prop Hilbert}
Let $(X, \|\cdot\|)$ be a Banach space, $C\subseteq X$ a closed set, and fix $x_0\in X\setminus C$ and $y\in C$. Assume that there exists $r_0$ such that $f_y(x) = \|x-y\|$ has a Lipschitz derivative on $B(x_0,r_0)$:
\begin{align}\label{lip of der}
\big\|f_y'(x_1)-f_y'(x_2)\| \le L_{x_0}\|x_1-x_2\|.
\end{align}
Then the norm is Lipschitz smooth on $-y + B_{x_0} = B(x_0-y, r_0)$. If in addition there exists a function $F: X \to \R$ satisfying 
\begin{align}\label{strong conv}
\big(F'(x_1)-F'(x_2)\big)(x_1-x_2) \ge L_{x_0}\|x_1-x_2\|^2, ~~ \forall x_1,x_2\in B(x_0,r_0),
\end{align}
then $(X,\|\cdot\|)$ admits an equivalent norm which is 2-uniformly convex. In particular, if $X=L_p$ then $p=2$.
\end{prop}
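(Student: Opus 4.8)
The plan is to treat the two conclusions separately, since the first rests on a direct integration argument while the second is where the real content lies.

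For the first conclusion I would translate the hypothesis into a statement about the norm itself. Writing $u=x-y$, the function $f_y$ is the norm $N=\|\cdot\|$ composed with the translation $x\mapsto x-y$, so $f_y'(x)$ equals $N'(u)$ and \eqref{lip of der} becomes a Lipschitz bound on the derivative of $N$ on the translated ball $B(x_0-y,r_0)$. Fixing $u$ in this ball, a unit vector $v$, and $\tau>0$ small enough that the segment $[u-\tau v,\,u+\tau v]$ stays in the ball, I would apply the fundamental theorem of calculus to $s\mapsto N(u+sv)$ to write the symmetric second difference as $\|u+\tau v\|+\|u-\tau v\|-2\|u\|=\int_0^\tau \big(N'(u+sv)-N'(u-sv)\big)(v)\,ds$. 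Bounding the integrand by $\|N'(u+sv)-N'(u-sv)\|\le L_{x_0}\cdot 2s$ and integrating yields the bound $L_{x_0}\tau^2$, which is exactly the $q=1$ smoothness estimate (exponent $1+q=2$) once $u$ is normalized to the sphere. This step is routine and I expect no difficulty.

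For the second conclusion the first move is to convert \eqref{strong conv} from a statement about $F'$ into a genuine uniform convexity inequality for $F$. Using the analogous fundamental-theorem-of-calculus representation for $F(x)+F(y)-2F(\tfrac{x+y}{2})$ along $[x,y]$ and feeding in \eqref{strong conv} at the symmetric pair $\tfrac{x+y}{2}\pm\tfrac{t}{2}(x-y)$, I would obtain $\tfrac{F(x)+F(y)}{2}-F(\tfrac{x+y}{2})\ge \tfrac{L_{x_0}}{8}\|x-y\|^2$ for all $x,y\in B(x_0,r_0)$ (the segment and its midpoints remain in the ball by convexity). Thus $F$ is a continuous convex function that is uniformly convex of power type $2$ on a ball, and the remaining step is to invoke renorming machinery: the existence of a continuous convex function with modulus of convexity of power type $2$ forces $X$ to admit an equivalent norm whose modulus of convexity is of power type $2$, i.e. a $2$-uniformly convex norm.

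The hard part will be this last step, specifically the passage from a modulus controlled only on $B(x_0,r_0)$ to a global renorming of $X$. I expect to handle it by citing the theory of uniformly convex functions and renormings (for instance the characterizations in Deville--Godefroy--Zizler and in Borwein--Guirao--H\'ajek--Vanderwerff), after first localizing: since uniform convexity of power type $2$ is preserved under translation and under suitable suprema, one transports the local modulus on $B(x_0,r_0)$ to a function uniformly convex of power type $2$ on all bounded sets, which is the hypothesis those theorems require. Finally, for the $L_p$ assertion I would combine both conclusions. The first shows the $L_p$-norm is locally Lipschitz smooth at the nonzero point $x_0-y$, which is impossible unless $p\ge 2$: in any direction $v$ where $u=x_0-y$ nearly vanishes (such $v$ exist for every $\tau$ since the coordinates/values of $u$ decay), a one-coordinate perturbation gives a second difference of order $\tau^p$, exceeding $K\tau^2$ for small $\tau$ when $p<2$. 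The second shows $L_p$ is $2$-uniformly convexifiable, and since the optimal power type of convexity is the isomorphic invariant $\max\{2,p\}$, this forces $p\le 2$. Together these give $p=2$.
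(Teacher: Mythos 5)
Your argument is correct, and its skeleton coincides with the paper's: the first assertion via the equivalence of a Lipschitz derivative of the norm with the symmetric second-difference estimate; the second via converting \eqref{strong conv} into the midpoint strong-convexity inequality and then invoking a renorming theorem; and $p=2$ by playing the two conclusions against each other. The difference is that the paper's proof consists almost entirely of citations, while yours is self-contained at exactly those points: the paper quotes \cite[Prop.~2.1]{Fab85} for the first equivalence, where you instead integrate $s\mapsto \big(N'(u+sv)-N'(u-sv)\big)(v)$ directly; it quotes \cite[App.~A]{SS07} for the strong-convexity equivalence, where you run the analogous integration along the segment (your constant $L_{x_0}/8$ checks out); and it quotes \cite[Thm 5.4.3]{BV10} for the renorming step, where you invoke the same circle of results and, usefully, flag the ball-to-unit-ball localization issue that the paper's citation silently absorbs. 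Most notably, the paper gives no argument at all for the closing claim that $X=L_p$ forces $p=2$; you supply the intended one --- Lipschitz smoothness on a ball fails in $L_p$ for $p<2$, while $2$-uniform convexifiability fails for $p>2$ since the best attainable power type of convexity is an isomorphic invariant equal to $\max\{2,p\}$. Two small caveats, neither of which affects correctness: your second-difference bound is derived only for $\tau$ small enough that $[u-\tau v,\,u+\tau v]$ stays in the ball, so to meet the pointwise definition used in the paper (which quantifies over all $\tau>0$) you should enlarge the constant using the trivial bound $\|u+\tau v\|+\|u-\tau v\|-2\|u\|\le 2\tau$ for large $\tau$; and your coordinate-decay argument against Lipschitz smoothness is really an $\ell_p$ argument --- for non-atomic $L_p$ one should replace decaying coordinates by normalized indicator functions of sets of small measure.
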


\begin{proof}
To prove the first assertion note that \eqref{lip of der} is equivalent to 
\begin{align*}
\|x-y+h\|+\|x-y-h\|-2\|x-y\| \le L_{x_0}\|h\|^2, ~~ x\in B_{x_0}.
\end{align*}
See for example \cite[Prop. 2.1]{Fab85}.

To prove the second assertion, note that a function that satisfies \eqref{strong conv} is also known as \emph{strongly convex}: one can show that \eqref{strong conv} is in fact equivalent to the condition
\begin{align*}
f\left(\frac{x_1+x_2}{2}\right) \le \frac 1 2f(x_1)+\frac 1 2 f(x_2) - C\|x_1-x_2\|^2,
\end{align*}
for some constant $C$. See for example \cite[App. A]{SS07}. This implies that there exists an equivalent norm which is 2-uniformly convex (\cite[Thm 5.4.3]{BV10}).

\end{proof}

\begin{remark}
From \cite{Ara88} it is know that if $F:X\to \R$ satisfies 
\begin{align*}
\big(F'(x_1)-F'(x_2)\big)(v) \ge L\|x_1-x_2\|^2,
\end{align*}
for \emph{all} $x_1,x_2\in X$, and also that $F$ is twice (Fr\'echet) differentiable at one point, then $(X,\|\cdot\|)$ is isomorphic to a Hilbert space.
\end{remark}

\begin{remark}
If we replace the Lipschitz condition by a H\"older condition
\begin{align*}
\big\|f_y'(x_1)-f_y'(x_2)\big\| \le \|x_1-x_2\|^{\beta}, ~~ \beta <1, 
\end{align*}
then in order to follow the same scheme of proof of Theorem \ref{thm local DC}, instead of \eqref{anti lip hilbert}, we would need a function $F$ satisfying
\begin{align*}
\big(F'(x_1)-F'(x_2)\big)(x_1-x_2) \ge \|x_1-x_2\|^{1+\beta}, ~~ x_1,x_2\in B_{x_0}.
\end{align*}
which implies
\begin{align}\label{anti holder}
\big\|F'(x_1)-F'(x_2)\big\| \ge \|x_1-x_2\|^{\beta}, ~~ x_1,x_2\in B_{x_0}.
\end{align}
If $G= (F')^{-1}$, then we get
\begin{align*}
\|Gx_1-Gx_2\| \le \|x_1-x_2\|^{1/\beta}, ~~ x_1,x_2 \in F'(B_{x_0}),
\end{align*}
which can occur only if $G$ is a constant. Hence \eqref{anti holder} cannot hold and the scheme of proof cannot be used if we replace the Lipschitz condition by a H\"older condition.
\end{remark}

\medskip
 
\subsection{DC sets, DC representable sets}
 
\begin{defin}
A set $C$ is is said to be a DC set if $C=A\setminus B$ where $A,B$ are convex.
\end{defin} 
We can also consider the following class of sets.
\begin{defin}
A set $C\subseteq X$ is said to be DC representable if there exists a DC function $f:X\to R$ such that $C = \big\{x\in X: f(x) \le 0\big\}$.
\end{defin}
Note that if $C = A\setminus B$ is a DC set, then we can write $C = \Big\{\mathbbm 1_{B}-\mathbbm 1_{A}+1/2 \le 0\Big\}$, where $\mathbbm 1_A$, $\mathbbm 1_B$ are the indicator functions of $A,B,$ respectively.  Therefore, $C$ is DC representable. Moreover, we have the following.

\begin{thm}[Thach \cite{Th93}]
Assume that $X$ and $Y$ are two Banach space, and $T:Y\to X$ is surjective map with $\mathrm{ker}(T) \neq \emptyset$. Then for any set $M\subseteq X$ there exists a DC representable set $D\subseteq Y$, such that $M= T(D)$.
\end{thm}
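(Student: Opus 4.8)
The plan is to exploit the nontrivial kernel to expose a fiberwise structure and thereby reduce the statement to the construction of a single delta-convex function on $Y$ whose sublevel set projects onto $M$. First I would fix $0\neq v\in\ker T$ together with a functional $\phi\in Y^{*}$ normalized by $\phi(v)=1$, and use the splitting $Y=\ker\phi\oplus\mathbb{R}v$ to write each $y$ as $y=z+sv$ with $s=\phi(y)$. Since $v\in\ker T$ one has $T(y)=T(z)$, so each fiber $T^{-1}(x)$ contains the whole line $\{z+sv:s\in\mathbb{R}\}$; the coordinate $s$ is then a free \emph{slack variable} that can be varied without leaving the fiber. With this in hand, producing a DC representable $D$ with $T(D)=M$ amounts to finding a DC function $f\colon Y\to\mathbb{R}$ whose fiberwise infimum
\[
m(x)=\inf_{y\in T^{-1}(x)}f(y)
\]
satisfies $m(x)\le 0$ precisely when $x\in M$; indeed $D=\{f\le 0\}$ then has $T(D)=\{x:m(x)\le 0\}=M$, provided one disposes, by passing to strict inequalities where needed, of the fibers on which the infimum is approached but not attained.

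The function $f$ I would build as a difference $f=g-h$ of two convex functions, modeled on the Hilbert-space computation already recorded in the excerpt, where $d_C^{2}$ was written as a smooth convex function minus the support-type function $x\mapsto\sup_{y\in C}[\langle x,y\rangle-\|y\|^{2}/2]$. The point of that computation is that a supremum of affine functions is automatically convex, so one may encode a target set inside the subtracted convex term by indexing the affine pieces over it. Here I would index a family of affine functionals on $Y$ by the points of $M$, arranged so that along the fiber over $x\in M$ the slack coordinate $s$ can be tuned to drive $g-h$ to a nonpositive value, while over $x\notin M$ no choice of $s$ (indeed no point of the fiber) makes $g-h\le 0$. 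Combining such blocks, and invoking Proposition~\ref{prop select} to keep the resulting selection among delta-convex functions delta-convex, is the mechanism by which membership in $M$ is converted into the sign of a fiberwise minimum.

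The main obstacle, and the place where the hypotheses really enter, is achieving \emph{exact} equality $T(D)=M$ for a completely unstructured $M$: one must simultaneously guarantee that every fiber over $M$ is met and that no fiber over $X\setminus M$ leaks into $\{f\le 0\}$, all while keeping $f$ a genuine difference of two everywhere-finite convex functions. Because the restriction of a finite convex function to a line is continuous, the fiberwise infimum of a continuous construction is only upper semicontinuous, so $\{m\le 0\}$ is forced to be $G_{\delta}$; hence a naive continuous $f$ can capture only sets of bounded descriptive complexity, and the flexibility needed for an arbitrary $M$ must come from the abundance of convex functions available on $Y$ together with the extra room supplied by $\ker T$. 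Controlling the subtracted convex term so that the difference remains real valued and the sublevel set neither overflows into neighbouring fibers nor misses fibers over $M$ is the delicate technical heart of the argument; it is exactly here that surjectivity of $T$ (to reach every $x$) and nontriviality of $\ker T$ (to supply the slack coordinate) are both indispensable.
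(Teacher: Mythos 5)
First, a point of order: the paper does not prove this theorem at all --- it is quoted from Thach \cite{Th93} without proof --- so your proposal has to stand on its own merits, and it does not: the object that would carry the entire proof, the DC function $f$, is never constructed. You describe the properties it ought to have (``arranged so that \dots the slack coordinate $s$ can be tuned \dots'') and then explicitly defer ``the delicate technical heart of the argument''; that heart \emph{is} the theorem. Worse, the one concrete mechanism you do propose --- a subtracted convex term built as a supremum of affine functionals indexed by the points of $M$, modeled on the Hilbert-space identity for $d_C^2$ --- provably cannot work. A supremum of continuous affine functions depends only on the closed convex hull of the family: if $x\mapsto \ell_x$ is any continuous assignment of affine functions to points of $M$, then $\sup_{x\in M}\ell_x=\sup_{x\in \overline{M}}\ell_x$ pointwise. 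Hence any $D_M$ manufactured from such suprema satisfies $D_M=D_{\overline{M}}$, so $T(D_M)=T(D_{\overline{M}})$, and the scheme cannot even distinguish $M$ from its closure --- while the theorem concerns completely arbitrary $M$, possibly non-closed and non-Borel. Your own closing observation, that a continuous construction forces $\{m\le 0\}$ to have bounded descriptive complexity, is exactly this obstruction: it refutes your plan rather than completes it.

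The missing idea is that arbitrary membership can only be encoded at \emph{extreme points}, where convexity imposes no constraint, and this is what the kernel direction is really for. Keep your $\phi\in Y^*$ with $\phi(v)=1$ and $Z=\ker\phi$, and note $T|_Z:Z\to X$ is still surjective. Take a continuous strictly convex function $\psi$ on $Z$ (available, e.g., whenever $Z$ admits a strictly convex equivalent norm, so in particular for separable, reflexive, or Hilbert spaces), put $P=(T|_Z)^{-1}(M)$, and lift $P$ to the graph: $\Gamma=\{z+\psi(z)v : z\in P\}$. If some $z_0+\psi(z_0)v$ lies in $\mathrm{conv}(\Gamma)$, writing it as a convex combination $\sum_i\lambda_i\bigl(z_i+\psi(z_i)v\bigr)$ gives $\psi(z_0)=\sum_i\lambda_i\psi(z_i)\ge\psi\bigl(\sum_i\lambda_iz_i\bigr)=\psi(z_0)$, and equality in Jensen's inequality for the strictly convex $\psi$ forces every $z_i$ with $\lambda_i>0$ to equal $z_0$; hence $z_0\in P$. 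Consequently $\mathrm{conv}(\Gamma)$ meets the graph of $\psi$ exactly in $\Gamma$, i.e.
\[
\Gamma \;=\; \mathrm{conv}(\Gamma)\setminus\{z+sv : s>\psi(z)\},
\]
a difference of a convex set and an open convex set: a DC set, hence DC representable (via the paper's indicator-function remark; note this passage uses Thach's extended-valued conventions --- with real-valued, hence in finite dimensions continuous, DC functions the sublevel set would be closed and its projection merely $F_\sigma$, so no real-valued scheme can capture arbitrary $M$ there). Finally $T(\Gamma)=T(P)=M$. Observe the roles: no supremum of affine functions appears, the arbitrariness of $M$ is absorbed into the generally non-closed convex hull $\mathrm{conv}(\Gamma)$, and $v$ is not a slack variable to be ``tuned'' but the graphing coordinate in which strict convexity makes every lifted point extreme. (Your reduction $T(D)=\{m\le 0\}$ also has an unresolved infimum-attainment issue, but it is moot once the construction above is in place.)
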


Also, the following is known. See \cite{HPT00}.
 
\begin{prop}
If $C$ is a DC representable set, then there exist $A,B \subseteq X\oplus \R$ convex, such that $x\in C \iff (x,x') \in A\setminus B$.
\end{prop}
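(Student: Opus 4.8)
The plan is to read $A$ and $B$ off directly from a DC representation of the defining function, realising them as epigraphs. By hypothesis there is a DC function $f$ with $C = \{x \in X : f(x) \le 0\}$; write $f = g - h$ with $g, h : X \to \R$ convex and finite-valued, so that
\[
C = \{x \in X : g(x) \le h(x)\}.
\]
I would then set
\[
A = \{(x, t) \in X \oplus \R : g(x) \le t\}, \qquad B = \{(x, t) \in X \oplus \R : h(x) < t\},
\]
namely the epigraph of $g$ and the strict epigraph of $h$, respectively.

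The first step is to check that $A$ and $B$ are convex. Convexity of $A$ is precisely the convexity of $g$, since a real-valued function is convex if and only if its epigraph is convex. For $B$, if $(x_1, t_1), (x_2, t_2) \in B$ and $\lambda \in [0,1]$, then convexity of $h$ gives $\lambda t_1 + (1-\lambda) t_2 > \lambda h(x_1) + (1-\lambda) h(x_2) \ge h(\lambda x_1 + (1-\lambda) x_2)$, so the strict epigraph is convex as well. The second step is to identify the set difference,
\[
A \setminus B = \{(x, t) \in X \oplus \R : g(x) \le t \le h(x)\}.
\]
Projecting onto the first coordinate, the fibre over $x$ is nonempty exactly when $g(x) \le h(x)$, i.e. when $x \in C$; and in that case one may take $x' = g(x)$ (or any $x' \in [g(x), h(x)]$) to witness $(x, x') \in A \setminus B$. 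This yields the desired equivalence $x \in C \iff (x, x') \in A \setminus B$.

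The only real subtlety — and the step I would be most careful about — is the strictness in the definition of $B$. Using the non-strict epigraph of $h$ would delete exactly the boundary points where $g(x) = h(x)$, and those $x$ belong to $C$; the strict inequality is what makes $A \setminus B$ recover the closed sublevel set $\{g \le h\}$ rather than the open one. I would also record the minor point that, since $f$ is real-valued and DC, the functions $g$ and $h$ may be taken finite everywhere, so $A$ and $B$ are genuine subsets of $X \oplus \R$ and no infinite values intervene in the epigraph computations.
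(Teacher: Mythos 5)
Your proof is correct and follows the same basic idea as the paper's: realise $A$ and $B$ as epigraphs of the two convex functions in a DC decomposition of $f$, so that $C$ becomes the projection of $A\setminus B$. In fact your version is more careful than the paper's own proof, which sets $g_1(x,x') = f_1(x)-x'$, $g_2(x,x') = f_2(x)-x'$ and takes $A = \{(x,x') : f_1(x) \le x'\}$, $B = \{(x,x') : f_2(x) \le x'\}$, i.e.\ \emph{both} epigraphs non-strict. With that choice $A \setminus B = \{(x,x') : f_1(x) \le x' < f_2(x)\}$, whose projection onto $X$ is $\{x : f_1(x) < f_2(x)\}$; the points with $f_1(x) = f_2(x)$, which do belong to $C$, are lost (for instance $f_1 = f_2 = 0$ gives $C = X$ while $A\setminus B = \emptyset$). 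Your use of the strict epigraph for $B$ is exactly what repairs this: the fibre over $x$ becomes $\{t : g(x) \le t \le h(x)\}$, which is nonempty precisely when $x \in C$, and your convexity check for the strict epigraph is the one extra step this costs. So the subtlety you flagged as the step to be most careful about is not merely stylistic --- it corrects a genuine (if easily fixed) boundary defect in the published argument.
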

 
\begin{proof}
Define $g_1(x,x') = f_1(x)-x'$, $g_2(x,x') = f_2(x)-x'$. Let $A = \big\{(x,x') : g_1(x,x')\le 0\big\}$, $B = \big\{(x,x'): g_2(x,x') \le 0\big\}$. Then $x\in C \iff (x,x') \in A\setminus B$.
\end{proof}

In particular, every DC representable set in $X$ is a projection of a DC set in $X\oplus \R$. The following theorem was proved in \cite{TK96}
 
\begin{thm}[Thach-Konno \cite{TK96}]
If $X$ is a reflexive Banach space and $C\subseteq X$ is closed, then $C$ is DC representable.
\end{thm}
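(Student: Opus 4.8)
The plan is to reduce everything to a single renorming trick followed by a one-line limiting argument. The starting observation is that DC representability is invariant under passing to an equivalent norm: a function is a difference of convex functions for one norm precisely when it is for any equivalent norm (convexity refers only to the linear structure), and norm-closedness of $C$ is likewise unaffected since equivalent norms induce the same topology. Since $X$ is reflexive, it admits an equivalent \emph{locally uniformly rotund} (LUR) norm (see, e.g., \cite{BV10}). I would therefore replace $\|\cdot\|$ by such a norm at the outset and assume throughout that $\|\cdot\|$ is LUR. (If $C=\emptyset$ one takes $f\equiv 1$, so assume $C\neq\emptyset$.)

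Next I would build the candidate $f$ from a Bregman-type distance to $C$, which is automatically DC. For each $y\in C$ fix, by Hahn--Banach, a functional $y^*\in X^*$ with $\|y^*\|_{X^*}=\|y\|$ and $y^*(y)=\|y\|^2$ (a selection of the duality map), and set
\begin{align*}
h(x) = \sup_{y\in C}\Big[y^*(x)-\tfrac12\|y\|^2\Big], \qquad f(x) = \tfrac12\|x\|^2 - h(x).
\end{align*}
As a supremum of affine functions, $h$ is convex; the elementary bound $y^*(x)-\tfrac12\|y\|^2 \le \|y\|\,\|x\| - \tfrac12\|y\|^2 \le \tfrac12\|x\|^2$ shows $h$ is finite (indeed $h\le \tfrac12\|\cdot\|^2$, and $h>-\infty$ by evaluating at a fixed $y_0\in C$), so $f=\tfrac12\|\cdot\|^2 - h$ is a genuine real-valued DC function. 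Writing $D(x,y)=\tfrac12\|x\|^2 - y^*(x) + \tfrac12\|y\|^2$, one has $f(x)=\inf_{y\in C}D(x,y)\ge 0$, and the same computation yields the coercivity estimate $D(x,y)\ge \tfrac12\big(\|x\|-\|y\|\big)^2$.

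It then remains to prove $\{f\le 0\}=C$, which is the heart of the matter. The inclusion $C\subseteq\{f\le0\}$ is immediate, since $D(x,x)=0$ forces $f(x)\le0$ for $x\in C$. For the converse, suppose $f(x)=0$ and choose $y_n\in C$ with $D(x,y_n)\to0$; the case $x=0$ gives $\|y_n\|\to0$ and hence $y_n\to x$ directly, so assume $x\neq0$. The coercivity estimate gives $\|y_n\|\to\|x\|$, and feeding this back into $D(x,y_n)\to0$ gives $y_n^*(x)\to\|x\|^2$. Consequently $y_n^*(x+y_n)=\|y_n\|^2+y_n^*(x)\to 2\|x\|^2$, so $\|x+y_n\|\ge y_n^*(x+y_n)/\|y_n\|\to 2\|x\|$, while trivially $\|x+y_n\|\le\|x\|+\|y_n\|\to 2\|x\|$. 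Thus $\|y_n\|\to\|x\|$ and $\|x+y_n\|\to 2\|x\|$, which is exactly the hypothesis of the LUR property; hence $y_n\to x$ in norm, and since $C$ is closed, $x\in C$.

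The step I expect to be the main obstacle is precisely this reverse inclusion, and it is where the LUR renorming is indispensable. Without a Kadec-type rotundity one recovers only $\|y_n\|\to\|x\|$ together with weak subsequential limits lying in $C$, and because a merely norm-closed set need not be weakly closed, one cannot conclude $x\in C$ from weak convergence alone. The two structural points that carry the proof are therefore that DC representability is invariant under equivalent renormings, and that the infimal Bregman distance $\inf_{y\in C}D(x,\cdot)$ furnishes a manifestly DC function with the correct sublevel set; together they collapse the theorem to the short LUR limiting argument above.
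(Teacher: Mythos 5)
Your proof is correct, but note that the paper does not actually prove this theorem: it is stated as imported from Thach--Konno \cite{TK96}, so there is no internal proof to compare against. Your argument is a sound reconstruction of the known route. The reduction to an equivalent LUR norm is legitimate (convexity, hence DC representability, depends only on the linear structure, and reflexive spaces admit LUR renormings by Troyanski's theorem, the one nontrivial external ingredient you invoke), and the representation of $C$ as the zero set of the infimal Bregman-type distance $f(x)=\inf_{y\in C}D(x,y)$, a difference of the two real-valued convex functions $\tfrac12\|\cdot\|^2$ and $h$, is exactly what is needed. The verification of $\{f\le 0\}\subseteq C$ via $\|y_n\|\to\|x\|$, $\|x+y_n\|\to 2\|x\|$ and local uniform rotundity is correct, with the standard sequential (non-unit-vector) formulation of LUR; if one insists on the unit-sphere definition, a one-line normalization of $y_n$ and $x$ bridges the gap, which is where your separate treatment of $x=0$ is needed. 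Two remarks on what your approach buys. First, it is the natural Banach-space extension of the paper's own Hilbert-space computation in Section \ref{sec DC}: the inner product $\langle x,y\rangle$ there is replaced by the duality-map selection $y^*(x)$, and the LUR property substitutes for the parallelogram identity in forcing norm convergence of the minimizing sequence. Second, reflexivity enters only through the renorming theorem, so your argument in fact proves a formally stronger statement: every norm-closed set in a Banach space admitting an equivalent LUR norm (for instance any separable or, more generally, WCG space) is DC representable.
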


This raises the following question.
 
\begin{qst}\label{qst DC}
Is it true that for some classes of spaces, e.g. uniformly convex spaces, there exists $\alpha>0$ such that $d_C^{\alpha}$ is locally DC on $X\setminus C$ whenever $C$ is a DC representable set?
\end{qst}
If the answer to Question \ref{qst DC} is positive, then by the discussion in subsection \ref{sec diff} we could conclude that $N(C)$ has a $\sigma$-porous complement, thus giving an alternative proof of Theorem \ref{thm DMP}. One could also ask Question \ref{qst DC} for DC sets instead of DC representable sets.

\medskip

To end this note, we discuss some simple cases where DC and DC representable sets can be used to study the nearest point problem.

\begin{prop}
Assume that $C = X \setminus \bigcup_{a\in \Lambda}U_a$, where each $U_a$ is an open convex set. Then $d_C$ is locally DC (in fact, locally concave) on $X\setminus C$.
\end{prop}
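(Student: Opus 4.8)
The plan is to work locally and reduce, at each point of $X\setminus C$, to the distance function of the complement of a \emph{single} open convex set, for which concavity can be proved directly. Throughout, write $X\setminus C=\bigcup_{a\in\Lambda}U_a$, which is open. Fix $x_0\in X\setminus C$ and choose an index $a_0$ with $x_0\in U_{a_0}$. For an open convex set $U$ and $x\in U$, introduce $g_U(x)=d_{X\setminus U}(x)=\inf_{y\in X\setminus U}\|x-y\|$, and note that the open ball of radius $g_U(x)$ about $x$ is contained in $U$ (any $y$ with $\|y-x\|<g_U(x)$ cannot lie in $X\setminus U$). Since $C=\bigcap_{a\in\Lambda}(X\setminus U_a)\subseteq X\setminus U_{a_0}$, one always has $d_C(x)\ge g_{U_{a_0}}(x)$ for $x\in U_{a_0}$. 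The strategy is to upgrade this inequality to an equality on a neighborhood of $x_0$ and then invoke concavity of $g_{U_{a_0}}$.

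The concavity of $g_U$ on $U$ is the clean core of the argument, and I would prove it by a Minkowski-sum comparison of inscribed balls. Fix $x_1,x_2\in U$, put $r_i=g_U(x_i)$, let $\lambda\in[0,1]$, and set $x_\lambda=\lambda x_1+(1-\lambda)x_2$ and $r_\lambda=\lambda r_1+(1-\lambda)r_2$. Writing $\Omega$ for the open unit ball (so that $x+r\Omega$ is the open ball of radius $r$ about $x$) and using $\lambda r_1\Omega+(1-\lambda)r_2\Omega=r_\lambda\Omega$, which holds because $\Omega$ is convex, one obtains
\[
x_\lambda+r_\lambda\Omega=\lambda\big(x_1+r_1\Omega\big)+(1-\lambda)\big(x_2+r_2\Omega\big).
\]
Both summands on the right lie in $U$, and $U$ is convex, so this Minkowski combination lies in $U$; hence the open ball of radius $r_\lambda$ about $x_\lambda$ is contained in $U$, giving $g_U(x_\lambda)\ge r_\lambda=\lambda g_U(x_1)+(1-\lambda)g_U(x_2)$. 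Thus $g_U$ is concave on $U$, and in particular it is DC there, being the negative of a convex function.

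Granting the local identification $d_C=g_{U_{a_0}}$ on some ball $B(x_0,\eta)\subseteq U_{a_0}$, the proposition follows at once: near $x_0$ the function $d_C$ coincides with the concave function $g_{U_{a_0}}$, so it is locally concave, hence locally DC, on $X\setminus C$; since $x_0$ was arbitrary this completes the proof. To establish the identification I would argue that, for $x$ close to $x_0$, the nearest points of $X\setminus U_{a_0}$ to $x$ lie on $\partial U_{a_0}$ and in fact belong to $C$, so that replacing $X\setminus U_{a_0}$ by the smaller set $C$ produces no closer point. Equivalently, the largest ball $B(x,d_C(x))$ meeting $C$ only in its boundary is already contained in the single convex set $U_{a_0}$, so that no other $U_a$ is needed to fill it out; this is exactly what forces $g_{U_{a_0}}(x)=d_C(x)$.

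The main obstacle is precisely this local identification when the $U_a$ overlap. The concavity lemma is entirely internal to one convex set, whereas $d_C$ records the distance to the complement of the \emph{whole} union, and the ball realizing that distance need not sit inside any single $U_a$. The step requiring the most care is therefore to show that near each $x_0$ one may select a convex set $U_{a_0}$ whose complement governs the distance to $C$ on a full neighborhood, so that the concavity of $g_{U_{a_0}}$ transfers to $d_C$; once this is in place, the Minkowski-sum concavity and the passage from concave to DC are routine.
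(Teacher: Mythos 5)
Your Minkowski-sum argument that $g_U=d_{X\setminus U}$ is concave on an open convex set $U$ is correct, and it supplies a self-contained proof of the first of the two facts the paper simply quotes from \cite{BF89}. The problem is the second fact: the local identification $d_C=g_{U_{a_0}}$ near $x_0$ is never actually proved --- you write ``granting the local identification'' and ``I would argue,'' and your closing paragraph concedes this is the step requiring care. This is a genuine gap, and at the level of generality of the statement as printed it cannot be closed, because with overlapping $U_a$ the identification is false: in $\R$ take $U_1=(0,2)$, $U_2=(1,3)$, so $C=(-\infty,0]\cup[3,\infty)$; at $x=3/2\in U_1$ one has $g_{U_1}(x)=1/2$ but $d_C(x)=3/2$, and no shrinking of the neighborhood helps. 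Worse, with overlaps even the proposition's parenthetical conclusion fails: for the crossing strips $U_1=\{(x,y):|y|<1\}$, $U_2=\{(x,y):|x|<1\}$ in $\R^2$, $C$ is the union of the four corner quadrants, and near $(1/4,0)$ one has $d_C(x,y)=\min\bigl\{\|(x,y)-(1,1)\|,\|(x,y)-(1,-1)\|\bigr\}$, whose restriction to the line $y=0$ is $\sqrt{(1-x)^2+1}$, strictly convex and non-affine; so $d_C$ is not concave on any neighborhood of $(1/4,0)$. (Your displayed inequality $d_C\ge g_{U_{a_0}}$ is correct but is the cheap direction; it is $d_C\le g_{U_{a_0}}$ that needs an idea.)

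The missing hypothesis, implicit in the paper through its definition of ``Swiss cheese'' sets and its appeal to \cite{BF89}, is that the $U_a$ be \emph{mutually disjoint}. With disjointness the identification has a two-line proof, and it holds globally on each $U_a$, not merely on a small ball: given $x\in U_a$ and any $z\in X\setminus U_a$, the segment $[x,z]$ meets $\partial U_a$ at some point $w$ with $\|x-w\|\le\|x-z\|$; if $w$ belonged to some $U_b$, then $U_b$, being an open neighborhood of a boundary point of $U_a$, would meet $U_a$, contradicting disjointness, so $w\in C$ and hence $d_C(x)\le\|x-z\|$; taking the infimum over $z$ gives $d_C\le g_{U_a}$ on $U_a$. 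Note this argument uses only approximate minimizers, so no existence of nearest points in $X\setminus U_a$ is needed --- your sketch via ``the nearest points of $X\setminus U_{a_0}$ to $x$'' would otherwise be problematic in infinite dimensions. Once this lemma is in place, your concavity argument does finish the proof, and in fact yields that $d_C$ is concave on all of each $U_a$, which is the form in which \cite{BF89} and the paper state it.
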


\begin{proof}
First, it is shown in \cite[Sec. 3]{BF89} that if $a\in \Lambda$, then $d_{X\setminus U_a}$ is concave on $U_a$. Next, it also shown in \cite{BF89} that if $x\in U_a$ then $d_{X\setminus U_a}(x) = d_C(x)$. In particular, $d_C$ is concave on $U_a$. 
\end{proof}

\begin{prop}
Assume that  $C = A\setminus B$ is a closed DC set, and assume $A$ is closed and $B$ is open, then $d_C$ is convex whenever $d_{C}(x) \le d_{A\cap B}$.
\end{prop}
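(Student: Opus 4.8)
The plan is to show that, under the stated hypothesis, the nearest-point problem for $C$ collapses onto the nearest-point problem for the convex set $A$, so that $d_C$ simply inherits convexity from $d_A$. First I would record the two elementary facts that drive everything. Since $B$ is open and $A$ is closed, $C = A\setminus B = A\cap B^c$ is closed; and since $C\subseteq A$, one always has $d_A(x)\le d_C(x)$. Moreover $d_A$ is convex, being the distance function of the convex set $A$, while $A\cap B$ is convex as an intersection of convex sets, so $d_{A\cap B}$ is a well-defined (convex) distance function against which the comparison $d_C(x)\le d_{A\cap B}(x)$ makes sense.

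The heart of the matter is the claim that $d_C(x)=d_A(x)$ at every $x$ with $d_C(x)\le d_{A\cap B}(x)$. To prove it I would argue by contradiction, supposing $d_A(x)<d_C(x)$. Choose a minimizing sequence $\{p_n\}\subseteq A$ with $\|x-p_n\|\to d_A(x)$; note that this requires no proximinality of $A$, so the argument is valid in an arbitrary Banach space. For all large $n$ we have $\|x-p_n\|<d_C(x)$, which forces $p_n\notin C$, since $p_n\in C$ would give the contradiction $d_C(x)\le\|x-p_n\|<d_C(x)$. As $p_n\in A$ and $p_n\notin C=A\setminus B$, we must have $p_n\in A\cap B$, so that $d_{A\cap B}(x)\le\|x-p_n\|$. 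Passing to the limit yields $d_{A\cap B}(x)\le d_A(x)<d_C(x)$, contradicting the hypothesis $d_C(x)\le d_{A\cap B}(x)$. Hence $d_A(x)=d_C(x)$.

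With the claim established the conclusion is immediate: on the set $S=\{x : d_C(x)\le d_{A\cap B}(x)\}$ the function $d_C$ agrees with the convex function $d_A$, and is therefore convex in the sense of coinciding with a convex function. (In the degenerate case $A\cap B=\emptyset$ the hypothesis holds everywhere, $C=A$, and $d_C=d_A$ is globally convex.)

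I would expect the only genuinely delicate point to be the reading of the conclusion rather than the mechanics of the proof: the region $S$ need not itself be convex — for $A=[0,10]$, $B=(3,7)$ in $\R$ one computes $S=(-\infty,3]\cup[7,\infty)$, which is disconnected — so ``$d_C$ is convex'' must be understood locally, as ``$d_C$ coincides with the convex function $d_A$ at each such point''. The substantive step is the contradiction argument, whose one subtlety, namely avoiding any appeal to proximinality of $A$, is handled cleanly by working with a minimizing sequence in place of an attained nearest point.
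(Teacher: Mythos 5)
Your proof is correct and takes essentially the same route as the paper: both rest on the decomposition $A = C \cup (A\cap B)$ to conclude that $d_C = d_A$ wherever $d_C \le d_{A\cap B}$, and then inherit convexity from $d_A$. The only difference is mechanical --- the paper invokes the identity $d_A = \min\big\{d_C, d_{A\cap B}\big\}$ for distance to a union directly, while you re-derive the needed pointwise equality by contradiction via a minimizing sequence.
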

 
\begin{proof}
Since $A = \big(A\setminus B\big) \bigcup B$, we have
$$d_A(x) = \min\big\{d_{A\setminus B}(x), d_{A\cap B}(x)\big\} = \min\big\{d_{C}(x), d_{A\cap B}(x)\big\}.$$
Hence, if $d_C(x) \le d_{A\cap B}(x)$ then $d_C(x) = d_A(x)$ is convex.
\end{proof}
 
\begin{prop}
Assume that $C$ is a DC representable set, i.e., $C = \big\{x\in X: f_1(x)-f_2(x)\le 0\big\}$, and that $f_2(x) =\max_{1\le i \le m}\varphi_i(x)$, where $\varphi_i$ is affine. Then $d_C$ is DC on $X$.
\end{prop}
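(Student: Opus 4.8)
The plan is to exploit the special form of $f_2$ to write $C$ as a finite union of closed convex sets, reduce $d_C$ to a pointwise minimum of the corresponding convex distance functions, and then invoke the DC selection result, Proposition \ref{prop select}.

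First I would note that since $C$ is DC representable with $f_1-f_2$ DC and $f_2=\max_{1\le i\le m}\varphi_i$ convex, the function $f_1$ is itself convex. The crucial observation is that, because $f_2$ is a maximum of the affine functions $\varphi_i$, the inequality $f_1(x)\le f_2(x)$ holds if and only if $f_1(x)\le\varphi_i(x)$ for at least one index $i$. Hence
\[ C = \bigcup_{i=1}^m C_i, \qquad C_i := \big\{x\in X : f_1(x)-\varphi_i(x)\le 0\big\}. \]
Since $f_1$ is convex and each $\varphi_i$ is affine, every $f_1-\varphi_i$ is a continuous convex function, so each $C_i$ is a closed convex set. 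Discarding those indices for which $C_i=\emptyset$ (at least one survives, as $C$ is nonempty), I may assume all $C_i$ are nonempty.

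Next I would use the elementary identity for the distance to a finite union of nonempty sets,
\[ d_C(x) = \min_{1\le i\le m} d_{C_i}(x), \qquad x\in X, \]
together with the standard fact that the distance to a \emph{convex} set is convex: for convex $C_i$ the map $(x,y)\mapsto\|x-y\|$ is jointly convex, and partial minimization over the convex set $C_i$ preserves convexity, so each $d_{C_i}$ is a finite, continuous convex function, in particular DC.

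Finally, since $d_C$ is $1$-Lipschitz and hence continuous, and since the pointwise minimum satisfies $d_C(x)\in\{d_{C_1}(x),\dots,d_{C_m}(x)\}$ for every $x$, Proposition \ref{prop select} applies directly and yields that $d_C$ is DC on $X$. I expect there to be no genuine obstacle here; the only delicate points are the bookkeeping needed to keep each $d_{C_i}$ real-valued (handled by removing the empty $C_i$) and the verification that a finite convex function on $X$ is continuous so that the $C_i$ are closed, both of which are routine once one assumes, as is implicit in the notion of a DC set, that the convex functions involved are continuous. The conceptual heart of the argument is simply that the "max of affine" structure of $f_2$ converts the single DC constraint into a finite union of convex constraints, after which convexity of the distance to a convex set and the selection theorem finish the proof.
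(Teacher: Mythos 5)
Your proof is correct and follows essentially the same route as the paper: decompose $C=\bigcup_{i=1}^m\{x: f_1(x)-\varphi_i(x)\le 0\}$ into convex sets, write $d_C=\min_i d_{C_i}$, and apply Proposition \ref{prop select}. Your extra care about discarding empty $C_i$ (so each $d_{C_i}$ is real-valued) and about closedness/convexity of the $C_i$ are minor refinements the paper leaves implicit.
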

 
\begin{proof}
Write
\begin{align*}
C & = \Big\{x : f_1(x)-f_2(x)\le 0\Big\}
\\ &= \Big\{x  :  f_1(x)-\max_{1\le i \le m}\varphi_i(x) \le 0\Big\}
\\ & = \Big\{x  : \min_{1\le i \le m}\big(f_1(x)-\varphi_i(x)\big) \le 0\Big\}
\\ & = \bigcup_{i=1}^n \Big\{x : f_1(x)-\varphi_i(x) \le 0\Big\}.
\end{align*}
where the sets $\Big\{x~ : ~ f_1(x)-\varphi_i(x) \le 0\Big\}$ are convex sets. Hence, we have that
\[d_C(x) = \min_{1\le i \le m}d_{C_i}(x),\]
is a minimum of convex sets and therefore by Proposition \ref{prop select} is a DC function.
\end{proof}

In \cite{Cep98} it was shown that if $X$ is superreflexive, then any Lipschitz map is a uniform limit of DC functions. See also \cite[Sec. 5.1]{BV10}. We have the following simple result.

\begin{prop}
If $X$ is separable, then $d_C$ is a limit (not necessarily uniform) of DC functions.
\end{prop}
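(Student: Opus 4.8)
The plan is to use separability to approximate the infimum defining $d_C$ by a finite minimum of translated norms, each of which is DC, and then to invoke Proposition~\ref{prop select}. The separability of $X$ is exactly what lets us replace the (possibly uncountable) set $C$ by a countable dense subset without changing the value of the infimum.

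First I would observe that since $X$ is separable and $C\subseteq X$, the set $C$ is separable in the induced topology; hence there is a countable set $\{y_k\}_{k=1}^{\infty}\subseteq C$ that is dense in $C$. For each $n\in\mathbb N$ set
\[
d_n(x) = \min_{1\le k\le n} \|x-y_k\|.
\]
Each function $x\mapsto \|x-y_k\|$ is convex, hence DC, and $d_n$ is continuous as a minimum of finitely many continuous functions; moreover at every point $x$ its value belongs to $\big\{\|x-y_1\|,\dots,\|x-y_n\|\big\}$. Therefore Proposition~\ref{prop select} guarantees that each $d_n$ is DC.

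It then remains to show that $d_n\to d_C$ pointwise. Fix $x\in X$. Since every $y_k\in C$, we have $d_n(x)\ge d_C(x)$ for all $n$, and $\{d_n(x)\}$ is nonincreasing. For the reverse estimate, fix $\eps>0$ and choose $y\in C$ with $\|x-y\|<d_C(x)+\eps/2$; by density of $\{y_k\}$ in $C$ there is an index $k$ with $\|y-y_k\|<\eps/2$, whence $\|x-y_k\|<d_C(x)+\eps$ and so $d_n(x)<d_C(x)+\eps$ for all $n\ge k$. Letting $\eps\to 0$ yields $d_n(x)\to d_C(x)$, which completes the argument.

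I do not anticipate a serious obstacle here: the only point requiring a little care is the application of Proposition~\ref{prop select}, which needs the approximating functions $d_n$ to be continuous and to take values pointwise among the finitely many convex functions $\|\cdot-y_k\|$, both of which are immediate for a finite minimum. I do not expect this route to yield uniform convergence, consistent with the parenthetical caveat in the statement, since the quality of approximation at a given $x$ depends on how well the finite set $\{y_1,\dots,y_n\}$ fills out $C$ near that particular point.
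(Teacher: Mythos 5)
Your proof is correct and follows essentially the same strategy as the paper: approximate $d_C$ by finite minima of translated norms over a countable set, apply Proposition~\ref{prop select} to conclude each finite minimum is DC, and verify pointwise convergence. There is, however, one detail where your version is actually sounder than the paper's: the paper takes a countable dense subset $Q$ of $X$ and writes $d_C(x)=\inf_{z\in C\cap Q}\|x-z\|$, but $C\cap Q$ need not be dense in $C$ --- it can even be empty (take $C$ a single point not lying in $Q$) --- so that identity fails as stated. You instead invoke the fact that a subset of a separable metric space is itself separable and choose the countable dense set $\{y_k\}$ \emph{inside} $C$, which is exactly what makes the inner approximation $\inf_k\|x-y_k\|=d_C(x)$ legitimate. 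Your convergence argument (monotonicity plus the $\eps/2$-density estimate) is complete, and your observation that only pointwise, not uniform, convergence can be expected matches the parenthetical in the statement.
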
 

\begin{proof}
If $X$ is separable, i.e., there exists a countable $Q = \{q_1,q_2,\dots\}\subseteq X$ with $\bar Q = X$. We have
\begin{align*}
d_C(x) = \inf_{z\in C}\|x-z\| = \inf_{z\in C\cap Q}\|x-z\| = \lim_{n\to \infty} \Big[\min_{z\in C\cap Q_n}\|x-z\|\Big],
\end{align*}
where $Q_n = \{q_1,q_2,\dots,q_n\}$. Again by Proposition \ref{prop select} we have that $\min_{z\in C\cap Q_n}\|x-z\|$ is a DC function as a minimum of convex functions. 
\end{proof}

\section{Conclusion}
Despite many decades of study, the core questions addressed in this note are still far from settled. We hope that our analysis will encourage others to take up the quest, and also to reconsider the related \emph{Chebshev problem} \cite{B07,BV10}.

\end{document}